\documentclass[12pt]{article}

\usepackage[a4paper,margin=3cm]{geometry}

\usepackage[T2A,T1]{fontenc}
\usepackage[utf8]{inputenc}
\usepackage[russian,english]{babel}
\usepackage{cmap}
\usepackage{microtype}

\usepackage{amsmath,amssymb,amsfonts,amsthm,mathtools}
\usepackage{mathtools}
\usepackage{mathrsfs}

\usepackage{graphicx}
\usepackage{enumitem}

\usepackage[numbers,sort&compress]{natbib}
\usepackage[hidelinks]{hyperref}
\usepackage{xurl}

\theoremstyle{plain}
\newtheorem{theorem}{Theorem}[section]
\newtheorem{proposition}[theorem]{Proposition}
\newtheorem{lemma}[theorem]{Lemma}
\newtheorem{corollary}[theorem]{Corollary}
\theoremstyle{remark}
\newtheorem{remark}[theorem]{Remark}

\theoremstyle{definition}
\newtheorem{definition}[theorem]{Definition}

\newtheorem{problem}[theorem]{Problem}

\theoremstyle{plain}

\renewcommand{\C}{\mathbb{C}}

\numberwithin{equation}{section}

\begin{document}

\title{Strong Asymptotics for a $3\times 3$ Riemann—Hilbert Problem\\
in a Regular Hard/Soft Two-Edge Regime}

\author{Artur Kandaian}
\date{06 February 2026}

\maketitle

\vspace{1.2em}

\begin{flushright}
{\small\itshape
In memory of my teacher, Prof.\ V.\,N.~Sorokin\\
(Dr.\ Sci.\ in Physics and Mathematics).}
\end{flushright}

\vspace{0.2em}

\maketitle

\begin{abstract}
We develop a complete Deift—Zhou steepest descent analysis for a
$3\times 3$ matrix Riemann—Hilbert problem arising in quadratic
Hermite—Pad\'e approximation and multiple orthogonality.
We focus on a regular two-edge regime featuring a hard edge at $0$
and a soft edge at $x_0$.
Under natural geometric and analytic assumptions ensuring a
nondegenerate sign structure of the associated phase functions,
the standard lens-opening mechanism applies.
The analysis is organized as a reusable analytic scheme: once the equilibrium/sign-chart input
is verified (assumptions (R1)—(R7)), the remaining steps are purely analytic.
As a result, the solution admits a description in terms of a reduced
outer parametrix with permutation jumps, complemented by Bessel-
and Airy-type local parametrices at the endpoints.
We obtain uniform strong asymptotics for the $(1,1)$-entry of the
solution with explicit $O(1/n)$ error bounds outside the endpoint
neighborhoods.
\end{abstract}

\medskip
\noindent\textbf{Keywords:}
Riemann—Hilbert problems; Deift—Zhou steepest descent;
Hermite—Pad\'e approximation; multiple orthogonality;
Bessel and Airy parametrices; strong asymptotics.

\vspace{3.0em}

\section{Introduction}

\subsection{Background and motivation}

Matrix Riemann—Hilbert problems of size $r+1$ provide a natural analytic framework for multiple
orthogonality and Hermite—Pad\'e approximation: the polynomial or rational objects of interest
appear as distinguished entries of the RHP solution, while asymptotic questions become amenable
to the nonlinear steepest descent method. In the $r=2$ case, one is led to $3\times 3$ problems
whose jump structure is typically \emph{channel-wise} — in the present setting, supported on two
active conductors with interactions in the $(1,3)$- and $(1,2)$-channels. This viewpoint has proved
particularly effective in strong-asymptotic problems for Hermite—Pad\'e approximants~\cite{KanSor17}, 
including models where the leading-order limit is governed by permutation-type outer data and endpoint
behavior is controlled by universal local parametrices. We follow the nonlinear steepest descent method 
for Riemann—Hilbert problems due to Deift—Zhou \cite{DeiftZhou1993,DeiftBook1999}.

The goal of this paper is to develop a complete Deift—Zhou steepest descent analysis for a
$3\times 3$ RHP in a \emph{regular two-edge regime}, featuring a \emph{hard edge} at $0$ and a
\emph{soft edge} at $x_0$. In this regime the geometry of the active conductors is stable and the
phase functions admit a nondegenerate sign structure, so that the standard lens-opening mechanism
yields exponentially small jumps on the lens lips \emph{outside the endpoint disks}, while the
remaining contributions are localized near the endpoints and captured by Bessel- and Airy-type
local models~\cite{DeiftZhou1993,DeiftBook1999,DeiftItsZhou1993Chapter,Bleher2008Notes,AptekarevBleherKuijlaars2005}.

The point is not to reprove the method, but to package the full $3\times 3$ hard/soft two-edge scheme 
with explicit uniformity conventions and a minimal permutation-type outer model in a form 
directly reusable across concrete MOP/Hermite–Padé settings.

\subsection{Main result (informal overview)}
A key feature of the presentation is the separation between the model-dependent input
(equilibrium problem / sign chart / endpoint type) and the universal analytic DZ steps.
Assumptions (R1)—(R7) isolate precisely the regular hard/soft two-edge phase portrait;
once they are verified in a given model, the remainder of the proof is purely analytic.

Under the regularity assumptions (R1)—(R7), we obtain strong asymptotics for the $(1,1)$-entry
$Y_{11}$ of the solution $Y$ of RHP-$Y$. The analysis proceeds by reducing the leading-order
problem to a reduced outer parametrix with permutation jumps, constructing a Bessel local
parametrix near the hard edge $0$ and an Airy local parametrix near the soft edge $x_0$, and
proving that the associated error RHP is a small-norm problem. Consequently, $Y_{11}$ admits an
outer/Bessel/Airy description with uniform $O(1/n)$ control in the corresponding regions (in
particular, uniformly on compact subsets \emph{outside the endpoint disks} (and uniformly in the
corresponding outer and local regimes).
These statements are made precise in the Main Theorem in Section~2 and in the
corollaries on edge scaling in Section~2.3.

\subsection{Outline of the paper}

Section~2 states the Riemann—Hilbert problem, the regular regime assumptions, and the main
asymptotic theorem together with its corollaries. Section~3 performs the $g$-normalization and
introduces the phase functions that drive the deformation. Section~4 establishes the channel-wise
factorization of the jumps and opens lenses around the active conductors. Section~5 constructs
the reduced outer parametrix. Section~6 builds the endpoint parametrices (Bessel at $0$, Airy at
$x_0$) and proves the matching estimates on $\partial U_0\cup\partial U_{x_0}$. Section~7
formulates the error RHP and applies the small-norm theory to obtain $R=I+O(1/n)$. Section~8
carries out the reconstruction and completes the proof of the Main Theorem.

\subsection{Related work and context}

The analytic and potential-theoretic study of Hermite—Pad\'e approximation and multiple
orthogonality has a substantial history. Foundational structural results for Nikishin systems~\cite{NikSor88} 
and their rational approximation theory are developed in the monograph of E.\,M.~Nikishin and
Vladimir~N.~Sorokin, which has remained a central reference point for the field.

On the Riemann—Hilbert side, the steepest descent method of P.~Deift and X.~Zhou has become the
standard tool for deriving strong asymptotics in regimes where the sign structure of the phase is
stable. For $3\times 3$ problems arising in quadratic (type~II) Hermite—Pad\'e approximation, the
RHP approach and its strong-asymptotic consequences have been developed in work of
A.\,B.\,J.~Kuijlaars, Herbert~Stahl, Walter~Van~Assche, and Franck~Wielonsky~\cite{AptekarevVanAssche2004}.

From the potential-theoretic viewpoint, constrained equilibrium problems and $S$-property
mechanisms play a key role in organizing the global geometry (the conductors) and in describing
phase diagrams. Related themes appear in classical work of A.\,I.~Aptekarev and Stahl, and in
later developments on vector equilibria and multiple orthogonality; see also the broader
potential-theory background as developed by Edward~B.~Saff 
and Vilmos~Totik~\cite{AptekarevStahl1992,Suetin2009ComplexWeight,AptekarevYattselev2011,SaffTotik1997}.

The present paper focuses on a clean \emph{regular hard/soft two-edge regime} for a $3\times 3$
RHP, emphasizing a modular construction (outer parametrix, endpoint parametrices, and small-norm
error analysis) with uniform estimates stated explicitly in the language ``outside the endpoint
disks.'' This restriction is deliberate: it yields uniform estimates with explicit $O(1/n)$ 
control and provides a baseline analytic scheme for subsequent critical-regime analyses.

This format is intended to be compatible with both Hermite—Pad\'e and multiple-orthogonality
interpretations, including constrained ``active/inactive'' descriptions that appear naturally in
phase-diagram analyses.

The Riemann—Hilbert approach to orthogonal polynomials and its steepest descent refinement
originates in the work of Fokas—Its—Kitaev and Deift—Zhou \cite{FokasItsKitaev1992,DeiftZhou1993,DeiftBook1999}.
For multiple orthogonal polynomials and the associated $3\times3$ RHPs we refer to
\cite{VanAsscheGeronimoKuijlaars2001,VanAsscheChapter2001} and the survey perspective in~\cite{Kuijlaars2009Survey,AptekarevKuijlaars2011}; 
see also \cite{FokasItsKitaev1992,DamanikPushnitskiSimon2008} for matrix-valued analogues.

While the constituent ingredients are classical, an explicit complete $3\times 3$ hard/soft two-edge write-up 
with permutation-type outer data and uniform ‘outside the endpoint disks’ bookkeeping is less commonly 
recorded in a single self-contained place.

\section{Problem Setup and Main Results}

\subsection{A $3\times 3$ Riemann—Hilbert problem}

Fix an integer $n\ge 1$. Let
\[
\Sigma := L_{\Gamma_{*}}\cup L_{\Delta_{*}}
\]
be an oriented contour consisting of two simple arcs (the \emph{active conductors}), with
distinguished endpoints $0$ and $x_0$. We consider the following $3\times 3$ Riemann—Hilbert
problem.

\medskip

\noindent\textbf{RHP—$Y$.} Find a $3\times 3$ matrix-valued function $Y(z)$ such that:
\begin{enumerate}
\item \textbf{Analyticity.} $Y$ is analytic in $\C\setminus\Sigma$.
\item \textbf{Jump relations.} The boundary values satisfy
\[
Y_{+}(z)=Y_{-}(z)\,J_Y(z),\qquad z\in\Sigma,
\]
where the jump matrix is channel-wise: on $L_{\Gamma_{*}}$ the only nontrivial coupling is in the
$(1,3)$-channel, and on $L_{\Delta_{*}}$ the only nontrivial coupling is in the $(1,2)$-channel.
Concretely,
\[
J_Y(z)=
\begin{cases}
J^{(\Gamma)}(z), & z\in L_{\Gamma_{*}},\\
J^{(\Delta)}(z), & z\in L_{\Delta_{*}},
\end{cases}
\]
with $J^{(\Gamma)}(z)$ differing from the identity only in the $(1,3)$-block, and $J^{(\Delta)}(z)$
differing from the identity only in the $(1,2)$-block (rank-one type couplings driven by scalar
weights).
\item \textbf{Normalization at infinity.} As $z\to\infty$,
\[
Y(z)=\bigl(I+O(1/z)\bigr)\,z^{n\Lambda},
\]
for a fixed diagonal matrix $\Lambda$, chosen so that the entry $Y_{11}(z)$ is a monic polynomial
of degree $n$.
\item \textbf{Endpoint behavior.} $Y$ has at most algebraic singularities at $0$ and $x_0$
consistent with the hard/soft edge setting, and no other singularities on $\Sigma$.
\end{enumerate}

This is the standard $3\times3$ RHP for type II MOPs; see, e.g., \cite{VanAsscheGeronimoKuijlaars2001,VanAsscheChapter2001,Kuijlaars2009Survey}.

\medskip

\noindent\textbf{Remark.} The analysis below uses the channel structure, the polynomial
normalization of $Y_{11}$, and the regular two-edge geometry; the specific scalar weights enter
only through the associated phase functions and the sign structure assumed next.

\subsection*{A concrete $3\times 3$ model (A concrete $3\times 3$ model (for orientation))}
\label{subsec:orientation-anchor}

Before formulating assumptions (R1)—(R7), we record a fully concrete example
of a $3\times 3$ Riemann—Hilbert problem of the type covered by the present
hard/soft two-edge scheme. The purpose is purely orientational: the analytic
steepest-descent mechanism will be developed under (R1)—(R7), while the model
below illustrates a standard source of such $3\times 3$ problems in multiple
orthogonality and Hermite—Pad\'e theory.

\paragraph{Multiple Laguerre polynomials (first kind).}

This concrete example (multiple Laguerre polynomials) and the corresponding strong asymptotics
via the Deift—Zhou method are treated in~\cite{LysovWielonsky2008}.

Fix $\alpha_1,\alpha_2>-1$ and let $n\in\mathbb N$ be large. Take a multi-index
$(n_1,n_2)\in\mathbb N^2$ with $n_1+n_2=n$ and define two weights on $(0,\infty)$
\begin{equation}\label{eq:anchor-weights}
w_1(x)=x^{\alpha_1}e^{-n x},\qquad
w_2(x)=x^{\alpha_2}e^{-n x},\qquad x>0.
\end{equation}
Let $P_{n_1,n_2}$ be the monic polynomial of degree $n$ satisfying the multiple
orthogonality conditions
\begin{equation}
\label{eq:anchor-MOP}
\begin{aligned}
\int_0^\infty P_{n_1,n_2}(x)\,x^k\,w_1(x)\,dx=0,\quad k=0,\dots,n_1-1,
\\
\qquad
\int_0^\infty P_{n_1,n_2}(x)\,x^k\,w_2(x)\,dx=0,\quad k=0,\dots,n_2-1.
\end{aligned}
\end{equation}

\paragraph{Riemann—Hilbert problem for $Y$.}
Set $\Sigma=(0,\infty)$ oriented from left to right.
Find $Y:\mathbb C\setminus\Sigma\to\mathbb C^{3\times 3}$ such that:

\begin{itemize}
\item[(Y1)] $Y$ is analytic in $\mathbb C\setminus\Sigma$;

\item[(Y2)] the boundary values satisfy, for $x\in(0,\infty)$,
\begin{equation}\label{eq:anchor-jump}
Y_+(x)=Y_-(x)\,J_Y(x),\qquad
J_Y(x)=
\begin{pmatrix}
1 & w_1(x) & w_2(x)\\
0 & 1 & 0\\
0 & 0 & 1
\end{pmatrix};
\end{equation}

\item[(Y3)] as $z\to\infty$,
\begin{equation}\label{eq:anchor-infty}
Y(z)=\Bigl(I+O(z^{-1})\Bigr)
\begin{pmatrix}
z^{n} & 0 & 0\\
0 & z^{-n_1} & 0\\
0 & 0 & z^{-n_2}
\end{pmatrix};
\end{equation}

\item[(Y4)] as $z\to 0$ with $z\notin\Sigma$, $Y$ has at most power-like growth
dictated by the algebraic factors $x^{\alpha_j}$ in~\eqref{eq:anchor-weights}.
In particular, $0$ is a hard edge (Bessel-type) endpoint in the steepest-descent
analysis.
\end{itemize}

In this model one has the identification
\begin{equation}\label{eq:anchor-Y11}
Y_{11}(z)=P_{n_1,n_2}(z).
\end{equation}

\paragraph{Hard/soft two-edge picture.}
After the standard $g$-function transformation driven by the associated vector
equilibrium problem (depending on the asymptotic ratio $n_1:n_2$), a one-cut
regular phase arises in which the main conductor is a compact interval
\[
\Delta=(0,x_0),
\]
with $0$ being a hard edge and $x_0$ a soft edge (Airy-type) endpoint. The lens
opening is performed around $\Delta$, and the endpoint neighborhoods are denoted
by $U_0$ and $U_{x_0}$.

\begin{equation}\label{eq:anchor-uniformity}
\begin{aligned}
\textbf{Uniformity convention.}\quad
&\text{All exponential estimates on the lens lips are understood on }\\
&\Sigma_{\mathrm{lens}}\setminus (U_0\cup U_{x_0}),
\ \text{i.e., outside the endpoint disks.}
\end{aligned}
\end{equation}

\begin{equation}\label{eq:anchor-right-mult}
\begin{aligned}
\textbf{Right-multiplier convention.}\quad
&\text{All multipliers are applied on the right;}\\
&\text{this convention will be used throughout.}
\end{aligned}
\end{equation}

In this one-cut regime, the post-lens central jumps reduce to an n-independent minimal outer problem, 
which in many $3\times 3$ models takes the permutation form used below.

\noindent
With this concrete example as background, we now formulate assumptions (R1)—(R7) which
abstract the regular hard/soft two-edge phase portrait needed for the Deift—Zhou
steepest-descent analysis in the general $3\times 3$ setting.

\paragraph{Relation to the abstract assumptions (R1)—(R7).}
In this concrete example, (R1)—(R2) correspond to regularity and nondegeneracy of
the equilibrium/sign-chart input for the associated phase functions, (R3)—(R5)
encode the admissible lens geometry and the reduction to the minimal constant-jump
outer problem, and (R6)—(R7) encode precisely the Bessel (hard-edge) behavior at $0$
and the Airy (soft-edge) behavior at $x_0$.
Once these conditions hold, the remainder of the analysis is purely analytic and yields
uniform strong asymptotics for $Y_{11}$ with an explicit $O(1/n)$ error outside
$U_0\cup U_{x_0}$.

\paragraph{How to verify (R1)—(R7) in practice (minimal).}
In concrete problems, assumptions (R1)—(R7) are checked in the following order.
First, one identifies the relevant $g$-function(s) from the associated (vector)
equilibrium problem and derives the corresponding sign chart, which fixes the main
conductor $\Delta$ and the admissible lens geometry. Next, one verifies regularity
of the endpoints: a hard-edge behavior at $0$ (algebraic singularity of the weights)
and a soft-edge behavior at $x_0$ (square-root vanishing of the limiting density).
Finally, one checks that the post-$g$ jump structure reduces to the minimal constant-jump
outer model and that the remaining deformations admit uniform exponential control on
$\Sigma_{\mathrm{lens}}\setminus(U_0\cup U_{x_0})$.
Once these items hold, the remainder of the analysis is purely analytic and yields
uniform strong asymptotics via the standard small-norm $R$-problem.

\paragraph{Where (R1)—(R7) come from in practice.}
In concrete MOP/Hermite—Pad\'e models, the phases $\varphi_{\Gamma_*},\varphi_{\Delta_*}$
and their sign chart are dictated by the underlying (vector) equilibrium problem
(or, equivalently, by the associated algebraic curve / $S$-property description);
see, e.g., \cite{Kuijlaars2010MOPRMT}.
In regular one-cut regimes this input provides: (i) the active conductor(s) and
endpoint types (hard/soft), (ii) strict inequalities $\Re\varphi>0$ on admissible lens
lips away from $U_0\cup U_{x_0}$, and (iii) the reduction to an $n$-independent
outer jump structure. Once this potential-theoretic (or algebraic) verification is in place,
the remainder of the proof is purely analytic (lenses, parametrices, small-norm $R$-problem).

\subsection{Regular regime assumptions}

We assume a regular two-edge regime with a hard edge at $0$ and a soft edge at $x_0$. The precise
hypotheses are grouped as follows.

\begin{itemize}
\item[(R1)] \textbf{Geometry of conductors.} The arcs $L_{\Gamma_{*}}$ and $L_{\Delta_{*}}$ are simple,
with endpoints among $\{0,x_0,\infty\}$ as dictated by the model, and they do not intersect except
possibly at endpoints. There are no additional singular points on $\Sigma$.
\item[(R2)] \textbf{Phase functions and boundary conditions.} There exist analytic phase functions 
$\phi_{\Gamma_{*}}(z)$ and $\phi_{\Delta_{*}}(z)$, defined off the conductors, such that
\begin{itemize}
\item $\Re \phi_{\Gamma_{*}}(z)=0$ for $z\in L_{\Gamma_{*}}$ and $\Re \phi_{\Delta_{*}}(z)=0$ for
$z\in L_{\Delta_{*}}$,
\item the boundary values on the conductors encode the oscillatory factors in the $(1,3)$- and
$(1,2)$-channels, respectively.
\end{itemize}
\item[(R3)] \textbf{Nondegenerate sign chart.} There exist lens neighborhoods around $L_{\Gamma_{*}}$
and $L_{\Delta_{*}}$ such that, on the corresponding lens lips (defined below), one has a uniform
lower bound
\[
\Re \phi_{\Gamma_{*}}(z)\ge c,\qquad \Re \phi_{\Delta_{*}}(z)\ge c,
\]
for some constant $c>0$, uniformly \emph{outside the endpoint disks}.
\item[(R4)] \textbf{Admissible lens deformation.} The lens contours can be chosen so that they do
not enter the endpoint disks $U_0$ and $U_{x_0}$, and the jump factorizations are compatible with
the lens opening (no additional jump singularities are created).
\item[(R5)] \textbf{Reduced outer problem is solvable.} The reduced outer RHP with permutation jumps
(described in Section~2.4) admits a unique solution $N$ normalized by $N(\infty)=I$.
\item[(R6)] \textbf{Endpoint type.} The point $0$ is a hard edge (Bessel type) and $x_0$ is a soft
edge (Airy type), with nondegenerate local conformal coordinates fixed by the phase functions
(Section~2.4).
\item[(R7)] \textbf{No critical transitions.} There is no endpoint merging, tangential contact, or
other critical degeneration of the sign chart; in particular, the regime remains away from
Pearcey-type or higher critical behavior.
\end{itemize}

All exponentially small estimates on the lens lips are understood on
$\Sigma_{\mathrm{lens}}\setminus (U_0\cup U_{x_0})$, i.e., outside the endpoint disks.

\medskip

\noindent\textbf{Remark.} These assumptions are exactly what is needed to implement a standard
Deift—Zhou steepest descent with Bessel/Airy local parametrices and a small-norm error analysis.

\subsection{Main theorem and consequences}

Let $g_{*}(z)$ be the primary $g$-function and $\ell_{*}$ the associated constant. Set
\[
G(z):=g_{*}(z)-\ell_{*}.
\]
Let $N$ be the outer parametrix normalized by $N(\infty)=I$. Let $P^{(0)}$ and $P^{(x_0)}$ be the
local parametrices in the endpoint disks $U_0$ and $U_{x_0}$ (Bessel at $0$, Airy at $x_0$),
matching $N$ on the boundary circles.

\begin{theorem}[Strong asymptotics in the regular regime]\label{thm:main}
Assume \textup{(R1)}—\textup{(R7)}. Then, as $n\to\infty$, the entry $Y_{11}$ admits the following
asymptotic descriptions.
\begin{enumerate}
\item[(i)] \textbf{Outer region.} Uniformly for $z$ in compact subsets of
$\C\setminus(\Sigma_R\cup U_0\cup U_{x_0})$,
\[
Y_{11}(z)=e^{nG(z)}\bigl(N_{11}(z)+O(1/n)\bigr).
\]
\item[(ii)] \textbf{Hard edge at $0$ (Bessel).} Uniformly for $z\in U_0\setminus\Sigma_R$,
\[
Y_{11}(z)=e^{nG(z)}\bigl(P^{(0)}_{11}(z)+O(1/n)\bigr).
\]
\item[(iii)] \textbf{Soft edge at $x_0$ (Airy).} Uniformly for $z\in U_{x_0}\setminus\Sigma_R$,
\[
Y_{11}(z)=e^{nG(z)}\bigl(P^{(x_0)}_{11}(z)+O(1/n)\bigr).
\]
\end{enumerate}
Moreover, the associated error matrix $R$ satisfies
\[
R(z)=I+O(1/n),
\]
uniformly for $z$ in compact subsets of $\C\setminus\Sigma_R$.
\end{theorem}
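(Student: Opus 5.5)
The proof follows the standard Deift--Zhou chain of transformations $Y\mapsto T\mapsto S\mapsto R$, with every model-dependent input supplied by (R1)--(R7).

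\textbf{Step 1: $g$-normalization.} Define
\[
T(z):=e^{-n\ell_*\Lambda_0}\,Y(z)\,e^{-n\mathbf{g}(z)}\,e^{n\ell_*\Lambda_0},
\]
where $\mathbf{g}=\mathrm{diag}(g_*,\dots)$ collects the $g$-functions. The diagonal entries are chosen so that $e^{-n\mathbf g(z)}$ cancels $z^{n\Lambda}$ at infinity, which gives $T(\infty)=I$. Because the multipliers act on the right, $T_{11}=e^{-nG}Y_{11}$ away from the other columns. By (R2), the jumps of $T$ on $L_{\Gamma_*}$ (resp.\ $L_{\Delta_*}$) are $3\times3$ embeddings of the $2\times2$ matrices
\[
\begin{pmatrix} e^{n\phi_+} & w\\ 0 & e^{n\phi_-}\end{pmatrix}
\]
in the $(1,3)$ (resp.\ $(1,2)$) channel, where $\Re\phi_\pm=0$ on the conductor.

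\textbf{Step 2: lens opening.} On each conductor, factorize the channel block in the usual way: a lower-triangular factor, times the constant permutation-type jump $\begin{pmatrix}0&w\\-w^{-1}&0\end{pmatrix}$, times another lower-triangular factor. The outer factors are extended analytically into the upper and lower lens regions, which defines $S$. Assumption (R4) guarantees that this creates no new singularities. By (R3), the jumps of $S$ on the lips are $I+O(e^{-cn})$ uniformly on $\Sigma_{\rm lens}\setminus(U_0\cup U_{x_0})$. The only remaining $n$-independent jumps are the permutation jumps on the conductors, which are the jumps of the reduced outer problem. Let $N$ be the solution supplied by (R5).

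\textbf{Step 3: local parametrices.} In $U_{x_0}$, build the Airy parametrix
\[
P^{(x_0)}=E_{x_0}\,\Psi_{\rm Ai}\bigl(n^{2/3}f_{x_0}\bigr)\,e^{\pm n\phi/2},
\]
embedded in the relevant channel, where $f_{x_0}(z)=\bigl(\tfrac32\phi(z)\bigr)^{2/3}$ is conformal by (R6). The prefactor $E_{x_0}$ is analytic in $U_{x_0}$ and is built from $N$. In $U_0$, build the Bessel parametrix in the same way, using $f_0\sim\phi^2/16$ together with the weight's algebraic factor. The identity acts in the inactive channel. From the large-argument expansions of $\Psi_{\rm Ai}$ and $\Psi_{\rm Be}$ we get
\[
P^{(p)}N^{-1}=I+O(1/n)\qquad\text{uniformly on }\partial U_p .
\]
Checking that $E_p$ is genuinely analytic is the \emph{main obstacle}. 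The branch cuts of $f_p^{\pm1/4}$ and of the weight factors must cancel against the permutation jumps of $N$ inside $U_p$. Moreover, the blocks must be arranged so that the $3\times3$ embedding of $N$ near $p$ factors as a (channel-block) local-solution form. I would first write $N$ locally as $N=E_p\,N_{\rm loc}$, where $N_{\rm loc}$ is the explicit block solution $\tfrac{1}{\sqrt2}f^{-\sigma_3/4}\begin{pmatrix}1&i\\ i&1\end{pmatrix}$ padded by $1$, and then verify that $E_p$ has no jump and has at worst a removable singularity at $p$, using the $L^2$ endpoint behaviour in (R1) and (R6). Assumption (R7) excludes coalescence, so the disks can be fixed with radius independent of $n$.

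\textbf{Step 4: small-norm analysis and reconstruction.} Set $R=SN^{-1}$ outside the disks and $R=S(P^{(p)})^{-1}$ inside $U_p$. Then $R$ has no jumps on the conductors inside the disks. Its jumps on $\partial U_p$ are $I+O(1/n)$, and its jumps on the lips outside the disks are $I+O(e^{-cn})$. The contour $\Sigma_R$ is fixed and bounded away from the endpoints; any unbounded lips carry uniformly decaying jumps, so the jumps are small in $L^1\cap L^\infty$. Standard small-norm theory (Neumann series for the Cauchy operator in $L^2(\Sigma_R)$) therefore gives $R=I+O(1/n)$ uniformly off $\Sigma_R$.

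Finally, invert the transformations. Outside the lenses $S=T$, so $Y_{11}=e^{nG}(RN)_{11}=e^{nG}\bigl(N_{11}+O(1/n)\bigr)$. Inside the disks, $(RP^{(p)})_{11}=P^{(p)}_{11}+O(1/n)$, since $P^{(p)}$ is bounded on $U_p\setminus\Sigma_R$ after the $n$-dependent scalings are absorbed into the stated form. Inside a lens but outside the disks, the lens factors affect only the first column through the exponentially small entries, so the $(1,1)$-entry is unchanged up to $O(e^{-cn})$. This yields (i)--(iii).
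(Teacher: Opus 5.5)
Your chain $Y\mapsto T\mapsto S\mapsto R$ is the paper's own (Sections~3--8), and Steps~1--4 follow it closely. You are more explicit than the paper about the analyticity of $E_p$, which the paper merely asserts in Lemmas~\ref{lem:bessel-match} and~\ref{lem:airy-match}. The gap is in your final reconstruction paragraph, which is where (i)--(iii) are actually decided.

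\emph{Inside the disks.} The claim that $P^{(p)}$ is bounded on $U_p\setminus\Sigma_R$ is false for the parametrix you build. For $E_p$ to be analytic and the matching to hold, $E_p$ must carry $n^{\sigma_3/2}$ (Bessel), resp.\ $n^{\sigma_3/6}$ (Airy), in the active block. So near $p$ the entries of $P^{(p)}$ reach size $n^{1/2}$, resp.\ $n^{1/6}$. Then $R=I+O(1/n)$ yields only $(RP^{(p)})_{11}=P^{(p)}_{11}+O(n^{-1/2})$, resp.\ $O(n^{-5/6})$. What holds at order $1/n$ is the column-scaled statement $(RE_p)_{1j}=(E_p)_{1j}+O(n^{-1})\max_k|(E_p)_{kj}|$. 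So ``absorbing the scalings into the stated form'' really means reformulating (ii)--(iii) as relative errors; it does not prove the absolute $O(1/n)$. Moreover, this controls $S_{11}$, not $T_{11}$. On the parts of $U_p$ inside the lenses, $T=S\,L^{\pm1}$ with lens entry $w^{-1}e^{n\phi}$, which is not small near $p$ or near the conductor. You must either build the lens factor into $P^{(p)}$ (as the paper stipulates in Section~\ref{subsec:T-from-S}) or keep the extra term.

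\emph{Inside the lenses, outside the disks.} The same issue appears here: $T_{11}=S_{11}\pm S_{1k}\,w^{-1}e^{n\phi}$ with $k=2$ or $3$. The factor $e^{n\phi}$ is exponentially small only at positive distance from the conductors, since $|e^{n\phi_\pm}|=1$ on $L_{\Gamma_*}\cup L_{\Delta_*}$. So your argument gives (i) only on compacts avoiding the conductors. Because $\Sigma_R$ does not contain the conductors, compacts in the region of (i) may meet them. There $Y_{11}$ has an additional oscillatory term of the same order as $e^{nG}N_{11}$. The paper's claim that $T=S$ on all of $\Omega_{\mathrm{out}}$ has the same defect, so the statement itself needs this restriction.

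\emph{Consistency in Step~2.} Your central factor $\bigl(\begin{smallmatrix}0&w\\-w^{-1}&0\end{smallmatrix}\bigr)$ has determinant $+1$, so it is not the permutation $P_{13}$ or $P_{12}$ of (R5). With the $N$ supplied by (R5), $SN^{-1}$ keeps the $n$-independent jump $N_-\,\mathrm{diag}(w,-w^{-1})\,N_-^{-1}$ (in the active block) on the conductors, so the small-norm step does not apply. You need the diagonal gauge of Remark~\ref{rem:variant-B-pointer}: a Szeg\H{o} function for $w$ plus a quarter-root gauge. The $|z-p|^{-1/4}$ bound on $N$ that you use for removability of $E_p$ must then come from that construction; it is not contained in (R1) or (R6).
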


The proof is obtained by the Deift—Zhou steepest descent scheme \cite{DeiftZhou1993,DeiftBook1999} 
implemented in Sections~3—8.

\begin{corollary}[Local edge behavior: Bessel and Airy scaling]\label{cor:edge-scaling}
Under the assumptions of Theorem~\ref{thm:main}, the local behavior of $Y$ near the endpoints is
described by the corresponding model problems: after rescaling by the local conformal coordinates
$f_0$ and $f_{x_0}$ (Section~2.4), the leading-order structure of $Y$ in $U_0$ is governed by the
Bessel model, and in $U_{x_0}$ by the Airy model, with $O(1/n)$ control away from $\Sigma_R$.
\end{corollary}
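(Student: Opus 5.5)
The corollary follows by unwinding the chain of transformations used for Theorem~\ref{thm:main}, keeping the full matrix $Y$ rather than only the $(1,1)$-entry.

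\textbf{Step 1: undo the transformations.} Recall the sequence $Y\mapsto T\mapsto S\mapsto R$:
\begin{itemize}
\item $T$ is the $g$-normalization, given by right multiplication by the diagonal factor $e^{-nG}$ and its companions built from $\phi_{\Gamma_*},\phi_{\Delta_*}$.
\item $S$ is obtained from $T$ by right multiplication by the lens factors, which are channel-wise lower-triangular matrices in the $(1,2)$ and $(1,3)$ blocks.
\item $R:=S\,(P^{(0)})^{-1}$ in $U_0$ and $R:=S\,(P^{(x_0)})^{-1}$ in $U_{x_0}$.
\end{itemize}
Inverting these for $z\in U_0\setminus\Sigma_R$ gives the exact identity
\[
Y(z)=C_n\,R(z)\,P^{(0)}(z)\,L(z)^{-1}\,D_n(z).
\]
Here $C_n$ is the constant prefactor coming from the normalization at infinity, $L$ is the lens factor (equal to the identity outside the lens), and $D_n$ is the diagonal $g$-factor. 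The analogous identity holds in $U_{x_0}$ with $P^{(x_0)}$.

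\textbf{Step 2: insert the local parametrix.} By the Section~6 construction, $P^{(0)}$ has the form
\[
P^{(0)}(z)=E_0(z)\,\Psi_{\mathrm{Bes}}\bigl(n^2 f_0(z)\bigr)\,e^{\pm n\phi(z)\sigma},
\]
where $E_0$ is analytic in $U_0$ and $\Psi_{\mathrm{Bes}}$ is embedded in the $2\times2$ block of the channel active at $0$, with the identity in the remaining channel. Likewise,
\[
P^{(x_0)}(z)=E_{x_0}(z)\,\Psi_{\mathrm{Ai}}\bigl(n^{2/3} f_{x_0}(z)\bigr)\,e^{\pm n\phi(z)\sigma}
\]
in its own block. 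Substituting into Step 1, the exponential factors $e^{\pm n\phi\sigma}$ cancel against $D_n$ and the phase parts of $L$, up to the $e^{nG}$ normalization. So in the rescaled variable $\zeta=n^2f_0(z)$, respectively $\zeta=n^{2/3}f_{x_0}(z)$, $Y$ equals $E(z)\Psi_{\mathrm{model}}(\zeta)$ times explicit diagonal and triangular factors.

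\textbf{Step 3: the error term.} Theorem~\ref{thm:main} gives $R=I+O(1/n)$ uniformly on compact subsets of $\C\setminus\Sigma_R$, and $R$ is bounded in the disks. We also need $E_0$, $E_{x_0}$ and their inverses to be bounded uniformly in $n$ on $U_0$ and $U_{x_0}$. They are built from $N$ and from the conformal maps, which are nondegenerate by (R6), so they are $n$-independent up to the scalings $n^{\pm\sigma_3/2}$ and $n^{\pm\sigma_3/6}$. Then
\[
R\,E\,\Psi=E\,(I+O(1/n))\,\Psi,
\]
which yields the leading-order model structure with $O(1/n)$ control. This applies away from $\Sigma_R$, where boundary values of $R$ are not at issue.

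\textbf{Main obstacle.} The $n$-dependent conjugations in $E$, namely $n^{\pm\sigma_3/2}$ for Bessel and $n^{\pm\sigma_3/6}$ for Airy, mean that $E^{-1}(R-I)E$ is no longer $O(1/n)$ entrywise. I would handle this by keeping the error on the left, writing $Y=C_n(I+O(1/n))E\Psi\cdots$, so that the statement refers to the left factor. For the $(1,1)$-entry I would read it off directly as in Theorem~\ref{thm:main}(ii)--(iii). I expect this bookkeeping, rather than any new estimate, to be the only delicate point; everything else is the algebra of the already-established transformations.
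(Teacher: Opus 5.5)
Your proposal is correct and follows essentially the same route as the paper. The paper derives the corollary from $R=I+O(1/n)$ via $S=(I+O(1/n))P^{(0)}$ on $U_0\setminus\Sigma_R$ and $S=(I+O(1/n))P^{(x_0)}$ on $U_{x_0}\setminus\Sigma_R$ (\S\ref{subsec:error-conseq}), then inserts the ansatz $E\,\Phi(\zeta)\,(\text{diagonal})$ for the local parametrices and undoes the lens and $g$-transformations in Section~\ref{sec:reconstruction}; keeping the $O(1/n)$ error as a left factor (because of the $n^{\pm\sigma_3/2}$, $n^{\pm\sigma_3/6}$ conjugations in $E$) matches the paper's left-multiplicative form, and keeping the lens factor explicit is slightly more careful than the paper's appeal to uniform boundedness of the lens multipliers inside the disks.
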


\paragraph{Concrete consequence (in the model example).}
In the multiple Laguerre example above, the present analysis yields strong asymptotics
for $P_{n_1,n_2}=Y_{11}$ uniformly on compact subsets of
$\mathbb C\setminus\bigl(\Delta\cup U_0\cup U_{x_0}\bigr)$, with a controlled
small-norm expansion $R(z)=I+R_1(z)/n+O(n^{-2})$ and, in particular,
an explicit $O(1/n)$ error term for $Y_{11}$ outside the endpoint disks.

\paragraph{Edge universality (one-line add-on).}
Moreover, in the endpoint neighborhoods $U_0$ and $U_{x_0}$ the local
parametrices imply the standard Bessel-type hard-edge behavior near $0$ and
the Airy-type soft-edge behavior near $x_0$ (in the corresponding microscopic
scalings), while all matching and error estimates remain uniform outside the
endpoint disks.

\medskip

\noindent\textit{(Comment.)} If one wishes, this corollary can be refined into explicit
kernel/universality statements for the associated determinantal structure in concrete
applications; we keep the present formulation at the level needed for strong asymptotics of
$Y_{11}$.

\subsection{Notation and branch conventions}

We collect the conventions used throughout.

\medskip

\noindent\textbf{Contours and endpoint disks.} Let $U_0$ and $U_{x_0}$ be small disjoint disks around
$0$ and $x_0$. For $\varepsilon>0$ small, let $L_{\Gamma_{*}}^{\pm}$ and $L_{\Delta_{*}}^{\pm}$ denote
the lens lips (normal offsets) around $L_{\Gamma_{*}}$ and $L_{\Delta_{*}}$, chosen \emph{outside the
endpoint disks}. Set
\[
\Sigma_{\mathrm{lens}}:=L_{\Gamma_{*}}^{+}\cup L_{\Gamma_{*}}^{-}\cup L_{\Delta_{*}}^{+}\cup L_{\Delta_{*}}^{-},
\qquad
\Sigma_R:=\Sigma_{\mathrm{lens}}\cup \partial U_0\cup \partial U_{x_0}.
\]
Boundary values $F_{\pm}$ are taken from the left/right of the oriented contour.

\medskip

\noindent\textbf{Permutation jumps for the outer problem.} We reserve $J_{\Gamma_{*}}$ and
$J_{\Delta_{*}}$ exclusively for the permutation data of the reduced outer parametrix $N$: on
$L_{\Gamma_{*}}$ the jump interchanges indices $1\leftrightarrow 3$, and on $L_{\Delta_{*}}$ it
interchanges indices $1\leftrightarrow 2$. (All other jump matrices are denoted $J_Y$, $J_T$,
$J_S$, $J_R$ at the corresponding transformation level.)

\medskip

\noindent\textbf{Local coordinates and branch normalization.} Define
\[
f_0(z)=\frac{\phi_{\Gamma_{*}}(z)^2}{4},
\qquad
f_{x_0}(z)=\Bigl(\frac{3}{2}\,\phi_{\Delta_{*}}(z)\Bigr)^{2/3},
\]
with branches fixed by
\[
f_0'(0)>0,\qquad f_{x_0}'(x_0)>0.
\]
This removes ambiguity in the orientation of the Bessel/Airy local variables and in the matching.

\medskip

\noindent\textbf{Uniformity conventions.} ``Uniformly'' refers to the stated region; in particular,
we repeatedly use uniformity on compact subsets of $\C\setminus\Sigma_R$ and the phrase ``outside
the endpoint disks'' for statements on the lens contours.

\section{The $g$-function normalization and sign structure}
\label{sec:g-sign}
\subsection{Regular regime and standing assumptions}
\label{subsec:regular-regime}

In Sections~\ref{sec:g-sign}—\ref{sec:error} we work in a regular regime, in the standard sense of the
Deift—Zhou nonlinear steepest descent method for matrix Riemann—Hilbert problems.
We record the assumptions that are used later (and, for the concrete Meixner/Meixner—Pollaczek models,
are verified by the corresponding vector equilibrium problem and the associated algebraic curve) .

In the discrete Meixner and Meixner—Pollaczek setting, explicit algebraic structures and
$\theta$-uniformizations appear naturally; for the Meixner case we rely on Sorokin's analysis
\cite{Sorokin2010SbMat,Sorokin2017Steklov,Sor10,Sor17}, while a closely related Meixner—Pollaczek model in a six-vertex context
is treated in \cite{BenderDelvauxKuijlaars2011} (see, e.g., \cite{KLS2010,Ismail2005} for background on the classical Meixner 
and Meixner—Pollaczek families).

\begin{itemize}
\item[(R1)] \textbf{Existence/uniqueness.}
The original $3\times 3$ RHP for $Y$ (Section~2) has a unique solution for all admissible multi-indices
$(n_1,n_2)$.

\item[(R2)] \textbf{$g$-normalization.}
There exists a scalar $g$-function $g_{*}(z)$ and a real constant $\ell_{*}$ such that the standard
$g$-transformation $Y\mapsto T$ (defined in \S\ref{subsec:g-transform}) yields a matrix $T$
normalized at infinity by $T(z)=I+O(1/z)$ as $z\to\infty$.

\item[(R3)] \textbf{Active arcs (conductors).}
There are two oriented arcs (conductors)
\[
L_{\Gamma_{*}},\qquad L_{\Delta_{*}},
\]
which carry the active parts of the reduced jumps after $g$-normalization.
Their endpoints include a hard edge at $0$ and a soft edge at $x_0>0$ (the second endpoint).

\item[(R4)] \textbf{Phase functions.}
There exist analytic phase functions $\phi_{\Gamma_{*}}$ and $\phi_{\Delta_{*}}$ such that,
on the conductors, all exponentially large/small factors in the jump matrices for $T$
appear only as $e^{\pm n\phi_{\Gamma_{*}}}$ in the $(1,3)$-channel and as $e^{\pm n\phi_{\Delta_{*}}}$
in the $(1,2)$-channel.

\item[(R5)] \textbf{Sign chart (S-property).}
For each conductor, there are lens neighborhoods whose lips can be chosen so that
\[
\Re \phi_{\Gamma_{*}}(z)>0 \quad \text{on the lips around } L_{\Gamma_{*}}, \qquad
\Re \phi_{\Delta_{*}}(z)>0 \quad \text{on the lips around } L_{\Delta_{*}},
\]
uniformly \emph{outside} small disjoint endpoint disks $U_0$ and $U_{x_0}$.
This is the sign condition needed for the exponential decay on the lens lips after opening lenses.

\item[(R6)] \textbf{Endpoint type.}
The endpoint at $0$ is of hard-edge type (Bessel regime), while the endpoint at $x_0$ is of soft-edge type
(Airy regime). In particular, the local behavior of the phases near $0$ and $x_0$ matches the standard
Bessel/Airy model RHPs (made precise in Section~\ref{sec:local}).

\item[(R7)] \textbf{Uniformity in the regular regime.}
All implied constants in $O(\cdot)$-bounds below are uniform in $n\to\infty$ within the regular regime,
and uniform on compact subsets away from the jump contours and away from the endpoint disks.
\end{itemize}

\medskip

Assumptions (R1)—(R7) are henceforth referred to as the \emph{standing regularity assumptions}.

\subsection{The $g$-transformation $Y\mapsto T$}\label{subsec:g-transform}

Let $n:=n_1+n_2\to\infty$ and assume $n_1/n\to\alpha\in(0,1)$.
We introduce a scalar function $g_{*}(z)$ and a constant $\ell_{*}$ as in (R2), and set
\[
G(z):=g_{*}(z)-\ell_{*}.
\]
The precise definition of $g_{*}$ depends on the underlying model (equilibrium problem / algebraic curve),
but only the general structural consequences are used in the steepest descent analysis.

\medskip

We define the $g$-normalized unknown $T$ by a conjugation of $Y$ with diagonal exponential factors
constructed from $g_{*}$ (and, if needed, from the multi-index ratio $\alpha$) so that:

\begin{itemize}
\item $T$ is analytic in $\C\setminus \Sigma_T$, where $\Sigma_T$ is the jump contour inherited from $Y$;
\item $T$ is normalized at infinity:
\begin{equation}\label{eq:T-infty}
T(z)=I+O(1/z),\qquad z\to\infty;
\end{equation}
\item the jumps of $T$ on the active arcs $L_{\Gamma_{*}}\cup L_{\Delta_{*}}$ acquire exponential factors
$e^{\pm n\phi_{\Gamma_{*}}}$ and $e^{\pm n\phi_{\Delta_{*}}}$ in the channels specified in (R4), while all other
jumps become either constant or exponentially close to $I$ (in the regions where they are inactive).
\end{itemize}

\medskip

\noindent
\textbf{Remark.}
Equation \eqref{eq:T-infty} is the key normalization that allows the construction of the outer parametrix
$N$ with $N(\infty)=I$ (Section~\ref{sec:outer}).

\subsection{Reduced jump structure and phase attachment}\label{subsec:phase-attachment}

We now fix the bookkeeping convention for the two ``active channels''.

\begin{itemize}
\item The conductor $L_{\Gamma_{*}}$ is attached to the \emph{$(1,3)$-channel}:
the jump matrix $J_T$ on $L_{\Gamma_{*}}$ contains exponentials only through factors
$e^{\pm n\phi_{\Gamma_{*}}}$ in the $(1,3)$ and $(3,1)$ entries.
\item The conductor $L_{\Delta_{*}}$ is attached to the \emph{$(1,2)$-channel}:
the jump matrix $J_T$ on $L_{\Delta_{*}}$ contains exponentials only through factors
$e^{\pm n\phi_{\Delta_{*}}}$ in the $(1,2)$ and $(2,1)$ entries.
\end{itemize}

\medskip

Outside $L_{\Gamma_{*}}\cup L_{\Delta_{*}}$, all remaining parts of the contour carry either constant jumps
or jumps that are exponentially close to the identity in the regular regime; they play no role in the
subsequent lens opening and are absorbed into the standard small-norm analysis.

\subsection{Sign structure and endpoint disks}\label{subsec:sign-structure}

Let $U_0$ and $U_{x_0}$ be fixed small disjoint disks around the endpoints $0$ and $x_0$, respectively.
We stress the following ``uniformity convention'' (used repeatedly later):

\begin{equation}\label{eq:uniformity-anchor}
\begin{aligned}
&\text{All exponential estimates are understood on }\\
&\text{$\Sigma\setminus (U_0\cup U_{x_0})$, i.e., outside the endpoint disks.}
\end{aligned}
\end{equation}

The sign chart assumption (R5) is recorded in the following form.

\begin{lemma}[Sign chart for the phases]\label{lem:sign-chart}
There exist lens contours (to be introduced in Section~\ref{sec:lens}) around $L_{\Gamma_{*}}$ and
$L_{\Delta_{*}}$ such that, on the corresponding lens lips,
\[
\Re \phi_{\Gamma_{*}}(z)>0 \quad \text{around } L_{\Gamma_{*}},\qquad
\Re \phi_{\Delta_{*}}(z)>0 \quad \text{around } L_{\Delta_{*}},
\]
uniformly for $z\in \Sigma_{\mathrm{lens}}\setminus(U_0\cup U_{x_0})$.
\end{lemma}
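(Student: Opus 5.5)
The lemma is essentially a reformulation of the standing assumption (R5) of Section~\ref{sec:g-sign}. The plan is to derive it from the local behaviour of the phases near the conductors together with a compactness argument. The real content is to show that (R5) can be realised by lips that stay at a fixed positive distance from the conductors. Once that is done, strict positivity of $\Re\phi$ on compact pieces of those lips upgrades to a uniform lower bound $c>0$, which is the form used later for exponential decay.

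\textbf{Step 1: local positivity near each conductor.}
Fix a compact subarc $K\subset L_{\Delta_{*}}\setminus(U_0\cup U_{x_0})$.
\begin{itemize}
\item By (R4), the boundary values satisfy $\Re\phi_{\Delta_{*},\pm}=0$ on $K$. The phase $\phi_{\Delta_{*},+}$ is analytic in a one-sided neighbourhood of $K$.
\item By the Cauchy--Riemann equations, the normal derivative of $\Re\phi_{\Delta_{*}}$ equals minus the tangential derivative of $\Im\phi_{\Delta_{*},+}$. Up to a constant, this is the density of the equilibrium measure on $K$.
\item In the regular regime, (R6)--(R7) give that this density is strictly positive on $K$, with no interior zeros. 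This is exactly the exclusion of critical transitions in (R7).
\end{itemize}
Hence $\Re\phi_{\Delta_{*}}$ increases linearly in the normal direction on both sides of $K$, with the sign convention fixed so that this direction points into the lens. Concretely, $\Re\phi_{\Delta_{*}}(z)\ge c_K\,\mathrm{dist}(z,K)$ in a tubular neighbourhood of fixed width $\delta_K$. The same argument applies verbatim to $L_{\Gamma_{*}}$ and $\phi_{\Gamma_{*}}$.

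\textbf{Step 2: choosing the lips.}
Choose the lips $L^{\pm}_{\Delta_{*}}$ and $L^{\pm}_{\Gamma_{*}}$ as normal offsets at distance $\varepsilon<\delta_K$ outside the endpoint disks. Inside the disks, join them to the endpoints along the standard Bessel and Airy directions. This is compatible with (R4) of Section~2. On the part of $\Sigma_{\mathrm{lens}}$ outside $U_0\cup U_{x_0}$, Step~1 gives $\Re\phi\ge c_K\varepsilon>0$.

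\textbf{Step 3: uniformity near the disk boundaries and for unbounded conductors.}
This is the main obstacle. The constant $c_K$ degenerates as $K$ approaches the endpoints, because the density vanishes like a square root at $x_0$ and blows up at $0$. It is therefore essential that the lips are only required to satisfy the estimate outside $U_0\cup U_{x_0}$. Since the disks have fixed radius, $K$ can be taken as the fixed compact set $L\setminus(U_0\cup U_{x_0})$, and $c_K$ is then a fixed positive constant.

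If a conductor extends to $\infty$, as allowed by (R1) of Section~2, compactness fails. Here I would use the large-$z$ behaviour of $g_{*}$ from (R2) to check directly that $\Re\phi$ grows along the lips. If that is awkward, I would instead restrict the lips to a large disk and treat the remaining tail as an exponentially small jump. Either way, $\inf\Re\phi$ over the lips outside the endpoint disks is a positive number $c$, and this $c$ is independent of $n$ by (R7).
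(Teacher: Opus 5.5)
Your first sentence is the paper's entire argument. The lemma is introduced as the form in which ``the sign chart assumption (R5) is recorded'' and is given no further proof. It is (R5) of Section~3 verbatim, and (R3) of Section~2 already supplies the uniform constant: $\Re\phi_{\Gamma_{*}},\Re\phi_{\Delta_{*}}\ge c>0$ on the lips outside $U_0\cup U_{x_0}$. So the ``real content'' you single out (lips at a fixed distance, uniformity up to the disk boundaries) is already part of the hypothesis, and nothing remains to prove.

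Steps 1--3 instead try to derive the sign chart from more primitive input, and as a proof from the listed hypotheses they do not go through.
\begin{itemize}
\item Nothing in (R1)--(R7) introduces an equilibrium measure, or the identity $\partial_n\Re\phi_{\Delta_{*}}=\pm\partial_s\Im\phi_{\Delta_{*},+}$ with a density.
\item Nothing there gives the relation $\phi_{-}=-\phi_{+}$ on the conductor. That relation, not a ``sign convention'', is what makes $\Re\phi$ positive on both lips.
\item Nothing there gives an analytic continuation of $\phi_{\Delta_{*},+}$ across $K$. One-sided analyticity with $\Re\phi=0$ on $K$ does not yield $\Re\phi\ge c_K\,\mathrm{dist}(z,K)$.
\end{itemize}
Reading (R6)--(R7) as ``the density is strictly positive on $K$'' imports exactly the nondegeneracy that (R3)/(R5) assert. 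The argument is therefore either circular or adds a model-dependent hypothesis. It is the standard check one performs in a concrete model such as the multiple Laguerre example, and the paper deliberately leaves it to that level.

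Separately, your fallback for an unbounded conductor fails. If you close the lens at $|z|=M$, the tail of an \emph{active} conductor keeps its unfactored jump with entries $e^{\pm n\phi}$ and $\Re\phi=0$. That jump is oscillatory of modulus one, not exponentially close to $I$. Moreover, without quantitative growth of $\Re\phi$ along unbounded lips, which the normalization in (R2) does not provide, the infimum of $\Re\phi$ over the lips need not be positive.
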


\begin{remark}\label{rem:why-disks}
The exclusion of $U_0$ and $U_{x_0}$ is essential: near the endpoints the phases exhibit the canonical
hard-/soft-edge behavior and the steepest descent deformation must be complemented by local parametrices
(Bessel at $0$, Airy at $x_0$). This is carried out in Section~\ref{sec:local}.
\end{remark}

\subsection{Consequences for lens opening}\label{subsec:conseq-lens}

Lemma~\ref{lem:sign-chart} is the only input from Section~\ref{sec:g-sign} needed for the lens opening step:
after factorization of the conductor jumps and the transformation $T\mapsto S$ (Section~\ref{sec:lens}),
the jumps on the lens lips become exponentially close to the identity on
$\Sigma_{\mathrm{lens}}\setminus(U_0\cup U_{x_0})$, while the remaining reduced jumps on the conductors
no longer contain exponentially large/small factors.
This reduction is the entry point to the outer model RHP in Section~\ref{sec:outer}.

\section{Opening of lenses and the transformation $T\mapsto S$}\label{sec:lens}
\subsection{Purpose of the lens opening}\label{subsec:lens-purpose}

The goal of this section is threefold.
\begin{itemize}
\item[(i)] We factorize the jump matrices for $T$ on the conductors so that the exponential factors
$e^{\pm n\phi_{\Gamma_{*}}}$ and $e^{\pm n\phi_{\Delta_{*}}}$ are moved from the conductors to the lens lips.
\item[(ii)] Using these factorizations, we define a piecewise analytic matrix $S$ by a lens-opening transformation.
\item[(iii)] As a consequence, the jumps of $S$ on the lens lips are exponentially close to the identity
outside the endpoint disks, while the jumps on the conductors reduce to $n$-independent (typically constant/permutation)
matrices. This reduction is the entry point to the outer model and the subsequent small-norm error analysis.
\end{itemize}

\subsection{Lens geometry, contours, and orientations}\label{subsec:lens-geometry}

\subsubsection{Conductors}\label{subsubsec:conductors}
We work with two oriented arcs (conductors)
\[
L_{\Gamma_{*}},\qquad L_{\Delta_{*}},
\]
which support the active parts of the reduced jumps after the $g$-normalization.
The orientation of each conductor is fixed once and for all (for instance, from its left endpoint to its right endpoint)
and will not be changed later.

\subsubsection{Lens lips and the lens contour}\label{subsubsec:lips}
For each conductor we introduce two lens lips:
\begin{itemize}
\item $\Sigma_\Gamma^+$ and $\Sigma_\Gamma^-$: the upper and lower lips surrounding $L_{\Gamma_{*}}$;
\item $\Sigma_\Delta^+$ and $\Sigma_\Delta^-$: the upper and lower lips surrounding $L_{\Delta_{*}}$.
\end{itemize}
We set
\[
\Sigma_{\mathrm{lens}}
:=\Sigma_\Gamma^+\cup\Sigma_\Gamma^-\cup \Sigma_\Delta^+\cup\Sigma_\Delta^- .
\]

\subsubsection{Lens domains}\label{subsubsec:lens-domains}
Let $\Omega_\Gamma^\pm$ denote the lens domains between $L_{\Gamma_{*}}$ and $\Sigma_\Gamma^\pm$,
and $\Omega_\Delta^\pm$ the analogous domains between $L_{\Delta_{*}}$ and $\Sigma_\Delta^\pm$.
We also write
\[
\Omega_\Gamma:=\Omega_\Gamma^+\cup\Omega_\Gamma^-,
\qquad
\Omega_\Delta:=\Omega_\Delta^+\cup\Omega_\Delta^-.
\]

\subsubsection{Endpoint disks and the uniformity convention}
\label{subsubsec:endpoint-disks}
We fix two small disjoint disks $U_0$ and $U_{x_0}$ around the distinguished endpoints
$0$ (hard edge) and $x_0$ (soft edge). The endpoint behavior requires separate local parametrices and is therefore
excluded from all lens-decay statements.

\begin{equation}
\label{eq:lens-uniformity-anchor}
\begin{aligned}
&\text{\textbf{Uniformity convention.}\;
All exponential estimates on the lens lips are understood on}\\
&\text{$\Sigma_{\mathrm{lens}}\setminus (U_0\cup U_{x_0})$,
i.e.\ outside the endpoint disks.}
\end{aligned}
\end{equation}

\subsubsection{Orientations of lens lips}\label{subsubsec:lip-orientations}
Each lens lip is oriented consistently with the boundary orientation of the corresponding lens domain.
This convention fixes the direction in which boundary values $S_\pm$ are taken on $\Sigma_{\mathrm{lens}}$.

\subsection{Factorization of the jumps of $T$ on the conductors}\label{subsec:factorization}

\subsubsection{Phase functions and channel attachment}\label{subsubsec:phase-recall}
We recall from Section~\ref{sec:g-sign} that the phase functions $\phi_{\Gamma_{*}}$ and $\phi_{\Delta_{*}}$
control the exponential behavior of the jumps in the corresponding channels:
\[
\phi_{\Gamma_{*}}\ \text{governs the $(1,3)$-channel},
\qquad
\phi_{\Delta_{*}}\ \text{governs the $(1,2)$-channel}.
\]
In particular, on $L_{\Gamma_{*}}$ the jump matrix $J_T$ contains exponentials only as $e^{\pm n\phi_{\Gamma_{*}}}$
in the $(1,3)$ and $(3,1)$ entries, while on $L_{\Delta_{*}}$ exponentials appear only as $e^{\pm n\phi_{\Delta_{*}}}$
in the $(1,2)$ and $(2,1)$ entries.

\subsubsection{Factorization pattern}\label{subsubsec:factorization-pattern}
We assume (and in concrete models verify) that the jumps admit factorizations of the form
\begin{equation}\label{eq:JT-factorization}
J_T
=
J_\Gamma^{(l)}\, J_\Gamma^{(0)}\, J_\Gamma^{(u)} \quad \text{on } L_{\Gamma_{*}},
\qquad
J_T
=
J_\Delta^{(l)}\, J_\Delta^{(0)}\, J_\Delta^{(u)} \quad \text{on } L_{\Delta_{*}},
\end{equation}
where:
\begin{itemize}
\item $J^{(u)}$ and $J^{(l)}$ are upper/lower triangular factors carrying the exponential terms;
\item $J^{(0)}$ is the reduced central factor, typically constant or permutation-type.
\end{itemize}

For the purposes of lens opening, the essential information is where the exponentials sit.
We fix the convention that:
\begin{itemize}
\item on $L_{\Gamma_{*}}$, $J_\Gamma^{(u)}$ carries $e^{-n\phi_{\Gamma_{*}}}$ in the $(1,3)$ entry,
while $J_\Gamma^{(l)}$ carries $e^{+n\phi_{\Gamma_{*}}}$ in the $(3,1)$ entry;
\item on $L_{\Delta_{*}}$, $J_\Delta^{(u)}$ carries $e^{-n\phi_{\Delta_{*}}}$ in the $(1,2)$ entry,
while $J_\Delta^{(l)}$ carries $e^{+n\phi_{\Delta_{*}}}$ in the $(2,1)$ entry.
\end{itemize}
Any equivalent convention is acceptable, provided it is used consistently and the resulting lens-lip jumps
decay outside $U_0\cup U_{x_0}$.

\subsection{Definition of $S$ (piecewise)}\label{subsec:S-definition}

\subsubsection{Outside lenses and endpoint disks}\label{subsubsec:S-outside}
We set
\begin{equation}\label{eq:S-outside}
S(z)=T(z),
\qquad
z\notin \Omega_\Gamma\cup\Omega_\Delta,
\quad
z\notin U_0\cup U_{x_0}.
\end{equation}

\subsubsection{Inside the lens domains}\label{subsubsec:S-inside}
Inside the lenses we define $S$ by right multiplication with the appropriate triangular factors:
\begin{equation}\label{eq:S-inside}
\begin{aligned}
S(z) &= T(z)\,(J_\Gamma^{(u)}(z))^{-1}, && z\in \Omega_\Gamma^+,\\
S(z) &= T(z)\,J_\Gamma^{(l)}(z),       && z\in \Omega_\Gamma^-,\\
S(z) &= T(z)\,(J_\Delta^{(u)}(z))^{-1},&& z\in \Omega_\Delta^+,\\
S(z) &= T(z)\,J_\Delta^{(l)}(z),       && z\in \Omega_\Delta^-.
\end{aligned}
\end{equation}

\begin{equation}\label{eq:right-multiplier-convention}
\begin{aligned}
&\text{\textbf{Right-multiplier convention.}\;
All multipliers are applied on the right;}\\
&\text{this convention will be used throughout.}
\end{aligned}
\end{equation}

\subsubsection{Analyticity and normalization}\label{subsubsec:S-analyticity}
By construction, $S$ is analytic in $\C\setminus \Sigma_S$, where $\Sigma_S$ is the deformed contour
consisting of the conductors and the lens lips (with the endpoint disks removed),
and $S$ inherits the normalization at infinity from $T$.

\subsection{Jump relations for $S$}\label{subsec:S-jumps}

\subsubsection{Reduced jumps on the conductors}\label{subsubsec:S-conductor-jumps}
On the conductors the jumps reduce to the central factors:
\begin{equation}\label{eq:S-reduced-jumps}
J_S|_{L_{\Gamma_{*}}} = J_\Gamma^{(0)},
\qquad
J_S|_{L_{\Delta_{*}}} = J_\Delta^{(0)}.
\end{equation}
In particular, these reduced jumps contain no exponentially large or small factors.

\medskip

\noindent
\textbf{Minimal choice (current working regime).}
In the minimal regular regime used in the main text we take
\begin{equation}\label{eq:reduced-jumps-minimal}
J_\Gamma^{(0)} := P_{13}=
\begin{pmatrix}
0&0&1\\
0&1&0\\
1&0&0
\end{pmatrix},
\qquad
J_\Delta^{(0)} := P_{12}=
\begin{pmatrix}
0&1&0\\
1&0&0\\
0&0&1
\end{pmatrix}.
\end{equation}
This suffices to make the outer RHP (Section~\ref{sec:outer}) a constant-jump problem with $N(\infty)=I$.

\begin{remark}\label{rem:variant-B-pointer}
If in a more refined setup one needs a diagonal gauge $D(z)\neq I$ (e.g., to match an alternative normalization
or to represent the solution in a ``Sorokin-style'' $\theta/\varphi$-form (following \cite{Sorokin2010SbMat,Sorokin2017Steklov})),
then the reduced jumps take the form $D_-^{-1} P\, D_+$.
For the present paper we keep the minimal choice \eqref{eq:reduced-jumps-minimal}.
\end{remark}

\subsubsection{Jumps on the lens lips: exponential closeness to the identity}\label{subsubsec:lens-decay}
On the lens lips we obtain triangular jumps whose off-diagonal entries are controlled by
$e^{-n\phi_{\Gamma_{*}}}$ and $e^{-n\phi_{\Delta_{*}}}$. Using the sign structure of the phase functions
(Lemma~\ref{lem:sign-chart}), we obtain the standard decay estimate.

\begin{lemma}[Lens decay]\label{lem:lens-decay}
There exists $c>0$ such that, for all $z\in \Sigma_{\mathrm{lens}}\setminus (U_0\cup U_{x_0})$,
\begin{equation}\label{eq:lens-decay}
J_S(z)=I+O\!\left(e^{-c n}\right),
\end{equation}
uniformly on $\Sigma_{\mathrm{lens}}\setminus (U_0\cup U_{x_0})$.
\end{lemma}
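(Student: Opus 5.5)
The plan is to compute the jump of $S$ on each lens lip directly from the piecewise definition \eqref{eq:S-inside}, and then bound its off-diagonal entries using Lemma~\ref{lem:sign-chart}. Since $T$ has no jump across a lens lip (the lips lie off the conductors), the jump is determined by the multipliers alone. On $\Sigma_\Gamma^+$, with the lip oriented so that $\Omega_\Gamma^+$ is on the $+$ side and the exterior on the $-$ side, we have $S_+=T(J_\Gamma^{(u)})^{-1}$ and $S_-=T$. Hence
\[
J_S=S_-^{-1}S_+=(J_\Gamma^{(u)})^{-1}.
\]
Similarly, on $\Sigma_\Gamma^-$ one finds $J_S=J_\Gamma^{(l)}$, or its inverse, depending on the orientation fixed in \S\ref{subsubsec:lip-orientations}. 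The $\Delta$-lips are handled in the same way. The orientation bookkeeping only changes $J$ to $J^{-1}$, and this does not affect the estimate, because each factor is a unipotent triangular matrix $I+E$ with $E$ a single off-diagonal entry, so $(I+E)^{-1}=I-E$.

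Next I would make the entries explicit. By the convention in \S\ref{subsubsec:factorization-pattern}, $J_\Gamma^{(u)}=I+a_\Gamma(z)e^{-n\phi_{\Gamma_*}(z)}E_{13}$. Here $a_\Gamma$ is the $n$-independent coefficient built from the scalar weight (and its analytic continuation off $L_{\Gamma_*}$). The lower factor carries $e^{+n\phi_{\Gamma_*}}$ in the $(3,1)$ entry. For the lower lip, the relevant continuation of $\phi_{\Gamma_*}$ into $\Omega_\Gamma^-$ is the one with the opposite boundary value, so that the exponential appearing there is again $e^{-n\phi}$ with $\Re\phi>0$. This is exactly the content of the first paragraph of \S\ref{subsubsec:lens-decay}, namely that the lip jumps are controlled by $e^{-n\phi_{\Gamma_*}}$ and $e^{-n\phi_{\Delta_*}}$. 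The $\Delta$-lips are treated in the same way, with $E_{12},E_{21}$ and $\phi_{\Delta_*}$.

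For the estimate itself, set $K:=\Sigma_{\mathrm{lens}}\setminus(U_0\cup U_{x_0})$. I would first argue that $\Re\phi_{\Gamma_*}$ and $\Re\phi_{\Delta_*}$ are bounded below by a positive constant on $K$. Lemma~\ref{lem:sign-chart} gives positivity uniformly there (this is the quantitative form (R3) of Section 2, $\Re\phi\ge c_0>0$). Alternatively, positivity follows from continuity on the compact part of $K$. If a lip extends to $\infty$, I would use the growth of $\Re\phi$ there, which comes from the $g$-normalization, to keep the bound. Second, I need $\sup_K|a_\Gamma|,\ \sup_K|a_\Delta|<\infty$. This holds because the weights are analytic in a neighbourhood of the closed lens region away from the endpoints, and (R4) states that no additional singularities are introduced. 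Combining the two bounds gives
\[
|J_S(z)-I|\le \max(\sup|a_\Gamma|,\sup|a_\Delta|)\,e^{-c_0 n},
\]
which is \eqref{eq:lens-decay} with $c=c_0$, or any smaller $c>0$ to absorb constants and the factor-inverse bookkeeping.

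The main obstacle is the uniformity of the lower bound for $\Re\phi$ near $\partial U_0$ and $\partial U_{x_0}$, where the lips approach the endpoints. There $\Re\phi$ tends to zero as $z$ approaches the endpoint (like $|z|^{1/2}$ at the hard edge and $|z-x_0|^{3/2}$ at the soft edge). My first step is to note that the disks have fixed radius independent of $n$, so $K$ stays a fixed positive distance from $\{0,x_0\}$. The local forms of $f_0,f_{x_0}$ together with (R6) then show that $\Re\phi$ has a strictly positive infimum on the lip segments just outside the disks. A second, smaller issue is a lip going to infinity in unbounded models such as the Laguerre case. There I would check that $|a|e^{-n\Re\phi}$ is dominated by $e^{-cn}$ because $\Re\phi$ grows linearly while $a$ grows at most polynomially.
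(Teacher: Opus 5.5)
Your proposal is correct and takes essentially the same approach as the paper. The paper gives no separate proof: it only notes that the lip jumps of $S$ are unipotent triangular matrices whose off-diagonal entry carries $e^{-n\phi_{\Gamma_*}}$ or $e^{-n\phi_{\Delta_*}}$, and then invokes the uniform positivity of $\Re\phi$ on $\Sigma_{\mathrm{lens}}\setminus(U_0\cup U_{x_0})$ from Lemma~\ref{lem:sign-chart} (i.e.\ (R3)). Your extra steps (computing $J_S$ directly from the multipliers, using $(I+E)^{-1}=I-E$, bounding the coefficients $a_\Gamma,a_\Delta$, rewriting the $e^{+n\phi_{\Gamma_*}}$ entry of $J_\Gamma^{(l)}$ as $e^{-n\phi}$ on the lower lip, which tacitly uses $\phi_-=-\phi_+$ on the conductor, and the tail estimate for unbounded lips) make explicit what the paper leaves implicit.
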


\subsubsection{Why the endpoint disks are excluded}\label{subsubsec:why-disks}
We exclude $U_0$ and $U_{x_0}$ from the lens decay analysis because the local behavior near the endpoints
requires separate model parametrices (Bessel at the hard edge $0$, Airy at the soft edge $x_0$).
These local constructions are carried out in Section~\ref{sec:local} and matched to the outer model on
$\partial U_0\cup\partial U_{x_0}$.

\subsection{Local conformal coordinates (used later)}\label{subsec:local-coordinates}
For later use in the local parametrices we record the standard choices of conformal coordinates in terms
of the phases:
\begin{equation}\label{eq:local-coord-Bessel}
f_0(z) := \frac{\phi_{\Gamma_{*}}(z)^2}{4}
\qquad \text{(hard edge at $0$, Bessel coordinate)},
\end{equation}
\begin{equation}\label{eq:local-coord-Airy}
f_{x_0}(z) := \Bigl(\frac{3}{2}\,\phi_{\Delta_{*}}(z)\Bigr)^{2/3}
\qquad \text{(soft edge at $x_0$, Airy coordinate)}.
\end{equation}

\medskip

At this point the Deift—Zhou scheme proceeds as follows:
the outer parametrix $N$ is defined by the reduced conductor jumps \eqref{eq:S-reduced-jumps},
the endpoints are handled by the local parametrices in $U_0$ and $U_{x_0}$, and the remaining error
problem is shown to be of small-norm type (Sections~\ref{sec:outer}—\ref{sec:error}).

\section{Outer parametrix $N$ (reduced model)}\label{sec:outer}
\subsection{The reduced outer RHP}\label{subsec:outer-RHP}

In this section we construct the outer parametrix $N$ for the post-lens problem.
Throughout we work \emph{outside the endpoint disks} $U_0\cup U_{x_0}$ and ignore the exponentially small
jumps on the lens lips (Lemma~\ref{lem:lens-decay}). Thus the outer model is obtained by retaining only the
\emph{reduced} conductor jumps of $S$.

\begin{problem}[Reduced outer RHP]\label{prob:outer}
Find a matrix-valued function
\[
N:\C\setminus\bigl(L_{\Gamma_{*}}\cup L_{\Delta_{*}}\bigr)\to\C^{3\times 3}
\]
such that:
\begin{enumerate}
\item $N$ is analytic in $\C\setminus(L_{\Gamma_{*}}\cup L_{\Delta_{*}})$;
\item on the conductors $N$ has the jumps
\begin{equation}\label{eq:outer-jumps}
N_+(z)=N_-(z)\,J_N(z),\qquad z\in L_{\Gamma_{*}}\cup L_{\Delta_{*}},
\end{equation}
with
\begin{equation}\label{eq:JN-definition}
J_N(z)=
\begin{cases}
J_\Gamma^{(0)}, & z\in L_{\Gamma_{*}},\\[0.3em]
J_\Delta^{(0)}, & z\in L_{\Delta_{*}},
\end{cases}
\end{equation}
where $J_\Gamma^{(0)}$ and $J_\Delta^{(0)}$ are the reduced factors from \eqref{eq:S-reduced-jumps};
\item $N$ is normalized at infinity:
\begin{equation}\label{eq:N-infty}
N(z)=I+O(1/z),\qquad z\to\infty.
\end{equation}
\end{enumerate}
\end{problem}

\begin{remark}\label{rem:outer-scope}
The outer model is used only on $\C\setminus(U_0\cup U_{x_0})$; the endpoint behavior is handled by the local
parametrices in Section~\ref{sec:local}.
\end{remark}

\subsection{Minimal constant-jump regime}\label{subsec:outer-minimal}

In the minimal regular regime fixed in Section~\ref{sec:lens}, the reduced jumps are the constant permutations
\eqref{eq:reduced-jumps-minimal}, i.e.
\[
J_\Gamma^{(0)}=P_{13},
\qquad
J_\Delta^{(0)}=P_{12}.
\]
Accordingly, the reduced outer RHP is a constant-jump problem on $L_{\Gamma_{*}}\cup L_{\Delta_{*}}$.

\begin{proposition}[Solvability and uniqueness]\label{prop:outer-uniq}
Problem~\ref{prob:outer} has a unique solution.
\end{proposition}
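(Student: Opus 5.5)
The plan is to supplement Problem~\ref{prob:outer} with the standard endpoint condition and then treat uniqueness and existence separately. Without an endpoint condition uniqueness fails, since one can multiply by rational matrices with poles at the endpoints. So I impose, as is implicit in the matching construction of Section~\ref{sec:local}, that $N(z)=O(|z-p|^{-1/4})$ as $z\to p$ for every finite endpoint $p$ of $L_{\Gamma_{*}}\cup L_{\Delta_{*}}$.

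\emph{Uniqueness.} First I control $\det N$. Since $\det P_{13}=\det P_{12}=-1$, $\det N$ changes sign across both conductors. Let $r(z)$ be the square root of the product of $(z-p)$ over the finite endpoints of the two arcs, with branch cuts on $L_{\Gamma_{*}}\cup L_{\Delta_{*}}$, normalized so that $r(z)/z^{m}\to1$ at infinity, where $m$ is half the number of finite endpoints. Then $r$ also changes sign exactly across the conductors, so $h:=r\,z^{-m}\det N$ has no jumps and is analytic off the endpoints. Near an endpoint, $h=O(|z-p|^{1/2-3/4})=O(|z-p|^{-1/4})$, so each singularity is removable. Since $h\to1$ at infinity, Liouville gives $h\equiv1$, i.e.\ $\det N=z^{m}/r\neq0$.

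Now let $\widetilde N$ be a second solution. Then $N\widetilde N^{-1}$ has no jumps, because the constant jumps cancel. Its entries are $O(|z-p|^{-1/4}\cdot|z-p|^{-1/4}\cdot|z-p|^{1/2})=O(1)$ near each endpoint, the last factor coming from the cofactor of $\widetilde N$ divided by $\det\widetilde N$. So the endpoint singularities are removable, $N\widetilde N^{-1}\to I$ at infinity, and Liouville gives $\widetilde N=N$.

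\emph{Existence.} Here I use a three-sheeted Riemann surface $\mathcal R$. Sheet $1$ is glued to sheet $3$ crosswise along $L_{\Gamma_{*}}$ and to sheet $2$ along $L_{\Delta_{*}}$, mirroring the permutations $1\leftrightarrow3$ and $1\leftrightarrow2$. In the generic configuration of (R1) there are four simple branch points, so Riemann--Hurwitz gives genus $1-3+4/2=0$. If an arc ends at $\infty$, or the arcs share an endpoint, I would redo the count, expecting again genus $0$ in the regular two-edge regime.

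Fix a rational uniformization $\zeta:\mathcal R\to\overline{\C}$ with $\zeta(\infty^{(j)})=\infty_j$. Each row of $N$ should have the form $(F(z^{(1)}),F(z^{(2)}),F(z^{(3)}))$, because the permutation jumps say precisely that this vector continues analytically across the cuts. I would take $F_k=q_k(\zeta)\sqrt{\zeta'}$-type expressions, or more concretely $F_k(z^{(j)})=\zeta$-rational functions multiplied by the square-root factor $(dz/d\zeta)^{-1/2}$ suitably normalized. The $(dz/d\zeta)^{-1/2}$ factor produces exactly the $|z-p|^{-1/4}$ endpoint behaviour. The rational parts $q_k$ are chosen with one zero at each $\infty_j$ for $j\neq k$ and the value $1$ at $\infty_k$, which is a Lagrange-interpolation problem on $\overline{\C}$ and is solvable in genus $0$.

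\emph{Main obstacle.} The hard step is the bookkeeping of the square-root factor. It must be single-valued on $\mathcal R$ up to the sign changes already absorbed by the permutations, and the degree count has to leave enough freedom to impose the six normalization conditions $N_{kj}(\infty)=\delta_{kj}$. I would first try an explicit rational $z(\zeta)$ of degree $3$ with the four critical points placed at the branch points and verify the degree count directly. If that fails in degenerate geometries, the fallback is to deduce existence from uniqueness via a vanishing lemma and a Fredholm argument for the constant-jump problem with $L^2$ boundary data.
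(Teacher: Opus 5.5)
Your observation that Problem~\ref{prob:outer} needs an endpoint condition is correct, and it improves on the paper, whose proof simply declares $N^{(1)}(N^{(2)})^{-1}$ entire. Without such a condition, $(I+A/(z-p))N$ is a second solution. Your treatment of $\det N$, which must change sign across both conductors, is likewise more accurate than the claim $\det N\equiv 1$ in the proof of Lemma~\ref{lem:outer-bounded}.

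The genuine gap is existence, and in the class you fixed it cannot be closed. With the \emph{pure} permutation jumps $P_{13},P_{12}$ there is no solution at all with $N=O(|z-p|^{-1/4})$, and your own row observation proves this. Each row $(N_{k1},N_{k2},N_{k3})$ defines a single-valued function $F_k$ on $\mathcal R$. It is holomorphic except possibly at the points lying over the endpoints; in particular it is holomorphic at $\infty^{(1)},\infty^{(2)},\infty^{(3)}$, with $F_k(\infty^{(j)})=\delta_{kj}$. Take a point of $\mathcal R$ over an endpoint $p$, with ramification index $e\le 3$ and local parameter $t=(z-p)^{1/e}$. There the bound $O(|z-p|^{-1/4})=O(|t|^{-e/4})$ makes the singularity removable. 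Hence $F_k$ is holomorphic on the compact surface $\mathcal R$, so it is constant, contradicting $F_k(\infty^{(j)})=\delta_{kj}$. This covers shared endpoints too. Arcs reaching $\infty$ are excluded anyway, since a constant jump $P\neq I$ there is incompatible with $N\to I$. So your uniqueness statement is vacuous, and no vanishing-lemma/Fredholm fallback can produce a solution that does not exist.

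The step you call bookkeeping is exactly where this surfaces. The factor $(dz/d\zeta)^{-1/2}$ is two-valued on $\mathcal R\cong\overline{\mathbb C}$, branching at the four critical points of $z(\zeta)$, so $q_k(\zeta)(dz/d\zeta)^{-1/2}$ is not a function on $\mathcal R$. Its sign cuts can only be placed along the images of the conductors. There they turn the jumps into $\begin{pmatrix}0&0&1\\0&1&0\\-1&0&0\end{pmatrix}$ and $\begin{pmatrix}0&1&0\\-1&0&0\\0&0&1\end{pmatrix}$ (up to which side carries the sign); permutation matrices absorb no signs. The paper's own existence argument hides the same defect. Its prefactor $H_j=\mathcal F_\theta(\theta_j,z)^{-1/2}$ in Appendix~A is a factor of exactly this type, and the assertion that the columns of $\mathcal M$ simply swap ignores its sign changes. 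So the paper does not supply the missing step.

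To get a true statement you must change one input.
\begin{itemize}
\item Keep $P_{13},P_{12}$ and impose an asymmetric endpoint condition: $O(|z-p|^{-1/2})$ at exactly two of the four branch points and boundedness at the other two. Then each $F_k$ lies in the three-dimensional space of rational functions of $\zeta$ with at most simple poles at the two corresponding ramification points. Evaluation at $\infty^{(1)},\infty^{(2)},\infty^{(3)}$ is an isomorphism, since a nonzero such function has at most two zeros. Existence and uniqueness then follow at once, with no square roots.
\item Replace the jumps by the signed matrices above, which is what Bessel/Airy matching actually needs. Then $|z-p|^{-1/4}$ is the right condition, and your interpolation idea works with $F_k=p_k(\zeta)/\sqrt{Q_4(\zeta)}$, $\deg p_k\le 2$, where $Q_4$ vanishes at the four critical points of $z(\zeta)$.
\end{itemize}
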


\begin{proof}
Uniqueness follows from the standard Liouville argument: if $N^{(1)}$ and $N^{(2)}$ solve the same RHP,
then $N^{(1)}(N^{(2)})^{-1}$ is entire and tends to $I$ at infinity, hence is identically $I$.
Existence follows from the explicit construction in the next subsection (or, equivalently, from the standard
algebraic-function solution on the associated three-sheeted surface with permutation monodromy).
\end{proof}

\subsection{Construction of $N$ in the minimal regime}\label{subsec:outer-construct}

We present the construction in the form needed for the global asymptotic analysis. Since the jumps are constant
permutations, one may build $N$ by assembling three scalar ``sheet-wise'' functions with the required permutation
monodromy and then enforcing $N(\infty)=I$.

\medskip

\noindent\textbf{Algorithmic description.}
\begin{enumerate}
\item Choose a three-sheeted Riemann surface $\mathcal R$ with branch cuts $L_{\Gamma_{*}}\cup L_{\Delta_{*}}$ and
sheet permutations matching the reduced jumps:
crossing $L_{\Gamma_{*}}$ exchanges sheets $(1,3)$, and crossing $L_{\Delta_{*}}$ exchanges sheets $(1,2)$.
\item Let $\theta_1,\theta_2,\theta_3$ denote the three branches of a degree-$3$ algebraic function $\theta(z)$ on
$\mathcal R$ (the specific cubic equation depends on the concrete model; see Appendix~A).
\item Form the ``sheet matrix'' $\mathcal M(z)$ whose columns are built from the branches and suitable algebraic
prefactors ensuring the correct local behavior away from endpoints:
\begin{equation}\label{eq:outer-M-general}
\mathcal{M}(z)=
\begin{pmatrix}
H_{1}(z)\Psi_{1}(z) & H_{2}(z)\Psi_{2}(z) & H_{3}(z)\Psi_{3}(z)\\
H_{1}(z)\theta_{1}(z)\Psi_{1}(z) & H_{2}(z)\theta_{2}(z)\Psi_{2}(z) & H_{3}(z)\theta_{3}(z)\Psi_{3}(z)\\
H_{1}(z)\theta_{1}(z)^{2}\Psi_{1}(z) & H_{2}(z)\theta_{2}(z)^{2}\Psi_{2}(z) & H_{3}(z)\theta_{3}(z)^{2}\Psi_{3}(z)
\end{pmatrix},
\end{equation}
where $\Psi_j$ are sheet-wise scalar factors encoding the monodromy, and $H_j$ are algebraic prefactors.
\item Normalize by
\begin{equation}\label{eq:N-from-M}
N(z):=\mathcal{M}(\infty)^{-1}\,\mathcal{M}(z),
\end{equation}
so that \eqref{eq:N-infty} holds.
\end{enumerate}

\medskip

\noindent
In the minimal regime, this construction yields a solution of the reduced outer RHP with permutation jumps.
For the reader who wants an explicit ``Sorokin-style'' closed form in terms of $\theta_\pm,\theta_{*}$ and
$\varphi_\pm,\varphi_{*}$, we provide it in Appendix~A; the main text only requires the existence, analyticity,
and normalization properties of $N$ listed in Problem~\ref{prob:outer}.

\begin{remark}[One-sentence pointer to Appendix~A]\label{rem:appendixA-pointer}
An explicit $\theta/\varphi$-representation of the outer parametrix $N$ (in the spirit of Sorokin's algebraic
constructions) is provided in Appendix~A.
\end{remark}

\subsection{Bounds and regularity away from endpoints}\label{subsec:outer-bounds}

The outer parametrix is used only on compact subsets of $\C\setminus(L_{\Gamma_{*}}\cup L_{\Delta_{*}}\cup U_0\cup U_{x_0})$.
In that region it is uniformly bounded together with its inverse.

\begin{lemma}[Outer boundedness away from endpoints]\label{lem:outer-bounded}
Let $K\Subset \C\setminus\bigl(L_{\Gamma_{*}}\cup L_{\Delta_{*}}\cup U_0\cup U_{x_0}\bigr)$ be compact.
Then
\[
\sup_{z\in K}\bigl(\|N(z)\|+\|N(z)^{-1}\|\bigr) <\infty.
\]
\end{lemma}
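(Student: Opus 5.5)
The plan is to deduce the bound from continuity and compactness, once we know $N$ and $N^{-1}$ are analytic off the conductors. Two ingredients are needed: analyticity of $N$ on $\C\setminus(L_{\Gamma_{*}}\cup L_{\Delta_{*}})$, which is part of Problem~\ref{prob:outer} (solvable by Proposition~\ref{prop:outer-uniq}), and analyticity of $N^{-1}$ there, which reduces to showing $\det N$ has no zeros.

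\textbf{Step 1: $\det N\equiv 1$.} Since $J_\Gamma^{(0)}=P_{13}$ and $J_\Delta^{(0)}=P_{12}$ are transpositions, $\det J_N=-1$. Hence
\[
(\det N)_+=-(\det N)_-
\]
on the conductors, so $\det N$ is not single-valued across the cuts. We therefore pass to $(\det N)^2$. It has trivial jump on $L_{\Gamma_{*}}\cup L_{\Delta_{*}}$, minus endpoints, and tends to $1$ at infinity by \eqref{eq:N-infty}.

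The point requiring care is the behaviour at the endpoints $0$, $x_0$ and at any junction of the conductors. There $N$ may be singular, but with at most algebraic (in the minimal regime, quarter-root type) growth, as in the construction \eqref{eq:outer-M-general}--\eqref{eq:N-from-M} via $H_j$, $\Psi_j$. The argument goes as follows:
\begin{itemize}
\item This growth makes the singularities of $(\det N)^2$ isolated and at worst poles.
\item The standard local analysis of the permutation-jump problem (sheet structure of $\mathcal R$) shows the singularities are in fact removable. Equivalently, one can take the minimal-growth solution that the construction produces.
\item Liouville then gives $(\det N)^2\equiv1$. Since $\C\setminus\Sigma$ is connected and $\det N\to1$ at infinity, $\det N\equiv1$.
\end{itemize}

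If this endpoint bookkeeping proves delicate, the fallback is weaker but suffices for the lemma. From the explicit $\mathcal M$ we would argue directly that $\det\mathcal M(z)$ is a Vandermonde-type quantity,
\[
\prod_j H_j\Psi_j\cdot\prod_{i<j}(\theta_j-\theta_i),
\]
which is nonzero away from branch points because the branches $\theta_j$ are distinct off the cuts and the prefactors $H_j\Psi_j$ do not vanish there.

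\textbf{Step 2: the bound.} Given Step 1, $N^{-1}=\operatorname{adj}N$ is analytic wherever $N$ is. Both $N$ and $N^{-1}$ are therefore continuous on the open set $\C\setminus(L_{\Gamma_{*}}\cup L_{\Delta_{*}})$. For $K$ compact in that set, minus the disks, $\|N\|+\|N^{-1}\|$ is a continuous function on a compact set and hence bounded.

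Near infinity no separate argument is needed, since $K$ is compact. If one wants uniformity on unbounded closed sets, \eqref{eq:N-infty} gives $N,N^{-1}=I+O(1/z)$ there.

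The main obstacle is Step 1: ruling out zeros of $\det N$, which amounts to controlling endpoint singularities so that Liouville applies. I expect the explicit algebraic construction of Section~\ref{subsec:outer-construct} (and Appendix~A) to settle it.
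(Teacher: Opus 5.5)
Your Step~2 is the paper's argument: once $N^{-1}$ is analytic, continuity on a compact set gives the bound. Your observation $\det J_N=\det P_{13}=\det P_{12}=-1$ is also correct. In fact it shows that the paper's own justification cannot hold, namely that $\det N$ is constant and equal to $1$ ``by the jump conditions and normalization''. But the main line of your Step~1 reaches the same impossible conclusion. You end with $\det N\equiv1$ on the connected set $\C\setminus(L_{\Gamma_{*}}\cup L_{\Delta_{*}})$, which contradicts the relation $(\det N)_+=-(\det N)_-$ you derived just before. The false step is the claim that the endpoint singularities of $(\det N)^2$ are removable. If they were, Liouville would force $\det N$ to be a constant $\pm1$, which is incompatible with the sign flip. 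So, under your own connectedness and algebraic-growth assumptions, $(\det N)^2$ must be a nonconstant rational function with poles at some endpoints. It equals $1$ at $\infty$, so it has as many zeros as poles. Liouville says nothing about where those zeros lie, and keeping them out of $K$ is exactly what has to be proved. (A piecewise constant $\det N=\pm1$ could only be consistent if the two arcs closed up into a Jordan curve.) Nor can any argument using only the data of Problem~\ref{prob:outer} as stated succeed. For an endpoint $e$ and a rank-one matrix $A$, the function $(I+A/(z-e))N$ has the same jumps and normalization. Its determinant $(1+\operatorname{tr}A/(z-e))\det N$ vanishes at $z=e-\operatorname{tr}A$, and this point can be placed in $K$.

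Invertibility is therefore a property of the particular solution. Your fallback is the argument that actually proves the lemma, and it should be your main route. It is cleaner than you state. Take $H_j=\mathcal F_\theta(\theta_j,z)^{-1/2}$ as in Appendix~A, and let $a_3(z)$ be the leading coefficient of $\mathcal F$ in $\theta$ (taken polynomial in $z$). Then $\prod_j\mathcal F_\theta(\theta_j,z)=-a_3(z)^3\prod_{i<j}(\theta_j-\theta_i)^2$, so the Vandermonde factor cancels and $(\det\mathcal M)^2=-a_3(z)^{-3}\prod_j\Phi_j(z)^2$; your $\Psi_j$ are the $\Phi_j$ of Appendix~A. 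This has no zeros off the conductors. Distinctness of the branches is needed only for analyticity of the $H_j$, not for nonvanishing. To apply this to the $N$ of the lemma you must also know that $N=\mathcal M(\infty)^{-1}\mathcal M$, i.e.\ uniqueness. The example above shows that uniqueness requires an endpoint growth condition, which Problem~\ref{prob:outer} omits. Moreover, the Liouville proof of Proposition~\ref{prop:outer-uniq} itself uses invertibility of $N$. So it should be run with the constructed, now provably invertible, solution in the role of $N^{(2)}$, together with that endpoint condition. With these two points supplied, your Step~2 finishes the proof.
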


\begin{proof}
Since $N$ is analytic and invertible on $K$ (no jumps and no endpoints are present), boundedness follows from
compactness and continuity. The determinant of $N$ is constant (by the jump conditions and normalization) and equals $1$,
so $N$ is invertible wherever it is analytic.
\end{proof}

\subsection{Matching targets for the local parametrices}\label{subsec:outer-matching-targets}

The local parametrices $P^{(0)}$ and $P^{(x_0)}$ constructed in Section~\ref{sec:local} will be matched to $N$ on the
boundary circles $\partial U_0$ and $\partial U_{x_0}$. Concretely, we will achieve

\begin{equation}
\label{eq:matching-goal}
\begin{aligned}
P^{(0)}(z)\,N(z)^{-1} &= I + O(1/n), \qquad z\in\partial U_0,\\
P^{(x_0)}(z)\,N(z)^{-1} &= I + O(1/n), \qquad z\in\partial U_{x_0}.
\end{aligned}
\end{equation}

uniformly as $n\to\infty$ within the regular regime. This matching is the key input for the small-norm error analysis
in Section~\ref{sec:error}.

\section{Local parametrices at the endpoints}
\label{sec:local}

In this section we construct the local parametrices in the endpoint disks $U_0$ and $U_{x_0}$.
They solve the exact $S$-RHP locally (up to the outer approximation) and are matched to the outer
parametrix $N$ from Section~\ref{sec:outer}. The construction is standard in the Deift—Zhou scheme:
a Bessel parametrix at the hard edge $0$ and an Airy parametrix at the soft edge $x_0$.

Throughout, $\Sigma_R:=\Sigma_{\mathrm{lens}}\cup\partial U_0\cup\partial U_{x_0}$ denotes the error contour,
and we use the convention fixed in Section~\ref{sec:lens}: all exponential estimates on lens lips are understood
on $\Sigma_{\mathrm{lens}}\setminus(U_0\cup U_{x_0})$.

\subsection{Hard edge at $0$: Bessel parametrix $P^{(0)}$}\label{subsec:bessel}

\subsubsection{Local model problem}\label{subsubsec:bessel-RHP}

Let $U_0$ be a sufficiently small disk centered at $0$, disjoint from $U_{x_0}$, such that
$U_0$ intersects the contour only along the portions of the conductors and lens lips adjacent to $0$.
We seek a matrix $P^{(0)}$ satisfying:

\begin{problem}[Bessel local RHP at $0$]\label{prob:bessel}
Find $P^{(0)}:\,U_0\setminus\Sigma_S\to\C^{3\times3}$ such that:
\begin{enumerate}
\item $P^{(0)}$ is analytic in $U_0\setminus\Sigma_S$;
\item $P^{(0)}$ has the same jumps as $S$ on $\Sigma_S\cap U_0$:
\[
P^{(0)}_+(z)=P^{(0)}_-(z)\,J_S(z),\qquad z\in\Sigma_S\cap U_0;
\]
\item $P^{(0)}$ matches the outer parametrix on $\partial U_0$:
\begin{equation}\label{eq:match-bessel}
P^{(0)}(z)\,N(z)^{-1}=I+O(1/n),
\qquad z\in\partial U_0,
\end{equation}
uniformly as $n\to\infty$ within the regular regime;
\item $P^{(0)}$ has at most the prescribed endpoint singularity at $0$ compatible with the original
RHP (the ``hard-edge'' behavior).
\end{enumerate}
\end{problem}

\subsubsection{Conformal coordinate and reduction to the Bessel model}\label{subsubsec:bessel-coordinate}

Let $\phi_{\Gamma_{*}}$ denote the phase attached to the $(1,3)$-channel (Section~\ref{sec:g-sign}—\ref{sec:lens}).
We fix the standard hard-edge conformal coordinate
\begin{equation}\label{eq:f0-def}
f_0(z):=\frac{\phi_{\Gamma_{*}}(z)^2}{4},
\end{equation}
defined and conformal in $U_0$, with $f_0(0)=0$ and $f_0'(0)\neq 0$.
We write the Bessel scaling variable as
\begin{equation}\label{eq:zeta0}
\zeta:=n^2 f_0(z),
\end{equation}
so that $\sqrt{\zeta}\sim \tfrac{n}{2}\phi_{\Gamma_{*}}(z)$ in $U_0$.

The local jumps of $S$ in $U_0$ are reduced to the standard Bessel model by a constant conjugation that
aligns the active $(1,3)$-block with the $2\times2$ Bessel RHP, leaving the remaining index as a spectator.
Concretely, there exists an analytic prefactor $E^{(0)}(z)$ in $U_0$ and a $3\times3$ model matrix
$\Phi_{\mathrm{Bes}}(\zeta)$ (built from the standard $2\times2$ Bessel parametrix embedded into the $(1,3)$-block)
such that the local candidate is
\begin{equation}\label{eq:P0-ansatz}
P^{(0)}(z)=E^{(0)}(z)\,
\Phi_{\mathrm{Bes}}\!\bigl(n^2 f_0(z)\bigr)\,
\mathrm{diag}\!\bigl(e^{nG(z)},\,1,\,e^{-nG(z)}\bigr),
\end{equation}
where $G(z)$ is the scalar exponent inherited from the $g$-normalization (Section~\ref{sec:g-sign}), and where the
diagonal exponential factor is written in the minimal regime convention (all gauge factors applied on the right).

\begin{remark}\label{rem:bessel-embedding}
Only the $(1,3)$-channel is active at the hard edge; thus the Bessel model is embedded into the $(1,3)$-block.
If one uses a different sheet numbering, the embedding is performed in the corresponding active channel, with no
change in the structure of the argument.
\end{remark}

\subsubsection{Matching on $\partial U_0$}\label{subsubsec:bessel-matching}

The prefactor $E^{(0)}(z)$ is chosen so that the large-$\zeta$ expansion of the Bessel model yields the
matching condition \eqref{eq:match-bessel}. In particular, using the standard Bessel asymptotics,
\[
\Phi_{\mathrm{Bes}}(\zeta)=I+O(\zeta^{-1/2}),\qquad \zeta\to\infty,
\]
uniformly in sectors away from the rays of the model contour, we obtain on $\partial U_0$ (where
$|n^2 f_0(z)|\asymp n^2$):
\[
P^{(0)}(z)\,N(z)^{-1}=I+O(1/n),
\qquad z\in\partial U_0,
\]
as required.

\begin{lemma}[Bessel matching]\label{lem:bessel-match}
The local parametrix $P^{(0)}$ defined by \eqref{eq:P0-ansatz} with the prefactor $E^{(0)}$ chosen as above
satisfies the matching condition \eqref{eq:match-bessel} uniformly on $\partial U_0$.
\end{lemma}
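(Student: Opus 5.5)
The plan is the standard hard-edge matching argument, carried out inside the $(1,3)$-block with index $2$ as a spectator. First I will fix the model: let $\Psi_{\mathrm{Bes}}$ be the standard $2\times2$ Bessel parametrix (built from $I_\alpha,K_\alpha$ and the Hankel functions, with $\alpha$ the local hard-edge exponent). Its jumps are on three rays from the origin, and it has the classical large-$\zeta$ expansion
\[
\Psi_{\mathrm{Bes}}(\zeta)=(2\pi\zeta^{1/2})^{-\sigma_3/2}\tfrac{1}{\sqrt2}\begin{pmatrix}1&i\\ i&1\end{pmatrix}\bigl(I+O(\zeta^{-1/2})\bigr)e^{2\zeta^{1/2}\sigma_3},
\]
uniformly in closed sectors. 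Embedding this into the $(1,3)$-block of a $3\times3$ matrix, with $1$ in position $(2,2)$, gives $\Phi_{\mathrm{Bes}}$.

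Next I deform the lens lips inside $U_0$ so that their images under $\zeta=n^2f_0(z)$ are the model rays. This is possible because $f_0$ is conformal with $f_0'(0)>0$. Then $z\mapsto\Phi_{\mathrm{Bes}}(n^2f_0(z))\,\mathrm{diag}(e^{nG},1,e^{-nG})$ has exactly the jumps $J_S$ on $\Sigma_S\cap U_0$. The phase relation $2\zeta^{1/2}=n\phi_{\Gamma_*}$, together with the attachment (R4) of $\phi_{\Gamma_*}$ to the $(1,3)$ entries of the triangular factors, makes the exponentials produced by the model agree with those in $J_\Gamma^{(u)},J_\Gamma^{(l)}$. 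On $L_{\Gamma_*}$ the model's central jump is $\begin{pmatrix}0&1\\-1&0\end{pmatrix}$, which up to signs and a constant diagonal gauge is the permutation $P_{13}$; I will absorb any sign discrepancy into a constant right factor.

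The main step is choosing the prefactor. Guided by the expansion above, set
\[
E^{(0)}(z):=N(z)\,\mathcal{A}^{-1}\,(2\pi n f_0(z)^{1/2})^{\sigma_{13}/2},
\]
where $\sigma_{13}=\mathrm{diag}(1,0,-1)$ and $\mathcal A$ is the constant $\tfrac{1}{\sqrt2}\begin{pmatrix}1&i\\ i&1\end{pmatrix}$ embedded in the $(1,3)$-block. I must check that $E^{(0)}$ is analytic in $U_0$. Off $L_{\Gamma_*}$ this is clear, since $N$ has no jump there inside $U_0$ by (R1) and disjointness from $L_{\Delta_*}$. Across $L_{\Gamma_*}$, $N$ jumps by $P_{13}$ and $f_0^{1/2}$ changes sign, so $(f_0^{1/2})^{\sigma_{13}/2}$ jumps by a diagonal factor; I verify that $P_{13}$ conjugated by $\mathcal A$ cancels this, so $E^{(0)}_+=E^{(0)}_-$. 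The remaining question is whether $0$ is a removable singularity. Here $E^{(0)}=O(|z|^{-1/2})$ at worst, because $N$ has at most quarter-root growth at a branch point of the $P_{13}$ monodromy and the factor contributes $f_0^{\pm1/4}$. An isolated singularity of this size is removable.

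I expect this analyticity check to be the main obstacle. It requires knowing the precise endpoint behaviour of the permutation-jump solution $N$ at $0$, and the exact sign convention relating $P_{13}$ to the Bessel central jump. If the signs do not match, I will first try to modify the constant gauge (a diagonal $\pm1$ or $\pm i$ right factor common to $N$ and the model) rather than alter $N$.

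With this choice, on $\partial U_0$ we have $|\zeta|\asymp n^2$, and the exponential factors $e^{\pm2\zeta^{1/2}}$ cancel against $\mathrm{diag}(e^{nG},1,e^{-nG})$ by the phase relation. Hence
\[
P^{(0)}N^{-1}=N\mathcal A^{-1}(\cdots)^{\sigma_{13}/2}\bigl(I+O(n^{-1})\bigr)(\cdots)^{-\sigma_{13}/2}\mathcal A N^{-1}.
\]
The conjugating factors are $O(n^{\pm1/2})$. The standard observation is that the $O(\zeta^{-1/2})$ term of the Bessel expansion has the off-diagonal structure for which this conjugation still leaves an $O(1/n)$ error; alternatively, one uses that the full expansion is $I+\sum_k c_k\zeta^{-k/2}$ with alternating diagonal and off-diagonal coefficients. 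Finally, Lemma~\ref{lem:outer-bounded}, applied on the compact circle $\partial U_0$ (away from $U_0$ closure by slightly enlarging the disk), bounds $N$ and $N^{-1}$ uniformly, which gives \eqref{eq:match-bessel} uniformly on $\partial U_0$.
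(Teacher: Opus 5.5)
Your route is the standard computation that the paper's two-line proof only invokes: the paper asserts $\Phi_{\mathrm{Bes}}(\zeta)=I+O(\zeta^{-1/2})$ and posits a suitable analytic $E^{(0)}$. Written out, however, your argument breaks at exactly the step you flagged as the main obstacle.

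Since $(f_0^{1/4})_+=i\,(f_0^{1/4})_-$ on $L_{\Gamma_*}\cap U_0$, the condition $E^{(0)}_+=E^{(0)}_-$ for $E^{(0)}=N\mathcal A^{-1}(2\pi nf_0^{1/2})^{\sigma_{13}/2}$ is equivalent to $\mathcal A\,J_N\,\mathcal A^{-1}=\mathrm{diag}(-i,1,i)$. The other orientation just replaces $i$ by $-i$. This identity holds when the $(1,3)$-block of $J_N$ is the Bessel central jump $\bigl(\begin{smallmatrix}0&1\\-1&0\end{smallmatrix}\bigr)$, which $\mathcal A$ diagonalizes to $-i\sigma_3$. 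But $\mathcal A P_{13}\mathcal A^{-1}=P_{13}$, so with the reduced jump $P_{13}$ your $E^{(0)}$ has a genuine jump across the conductor.

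Your fallback of a constant diagonal gauge cannot repair this. On the active block, $P_{13}$ has determinant $-1$ and eigenvalues $\pm1$, while the Bessel jump has determinant $1$ and eigenvalues $\pm i$. No constant gauge, diagonal or not, converts one into the other. More directly: $\det\Phi_{\mathrm{Bes}}\equiv1$ and $\det\mathrm{diag}(e^{nG},1,e^{-nG})=1$, so any $P^{(0)}$ of the form \eqref{eq:P0-ansatz} with analytic $E^{(0)}$ has $\det P^{(0)}=\det E^{(0)}$ continuous across $L_{\Gamma_*}$. That is incompatible with $P^{(0)}_+=P^{(0)}_-P_{13}$ unless $\det E^{(0)}\equiv0$.

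So, with the minimal choice \eqref{eq:reduced-jumps-minimal}, the construction cannot close. You need to take the reduced jump on $L_{\Gamma_*}$ to be the determinant-one matrix $\bigl(\begin{smallmatrix}0&w\\-w^{-1}&0\end{smallmatrix}\bigr)$ in the $(1,3)$-block, which is what a genuine factorization of $J_T$ produces. Then let $N$ carry the corresponding Szeg\H{o}-type factor (the $D_-^{-1}PD_+$ variant of Remark~\ref{rem:variant-B-pointer}). Finally, put the usual weight factors $W^{\pm\sigma_{13}/2}$ into $E^{(0)}$ and into the right-hand diagonal factor, as in the standard $2\times2$ hard-edge construction.

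Two further steps of the matching computation also need repair. First, the cancellation of $e^{2\zeta^{1/2}\sigma_{13}}$ against $\mathrm{diag}(e^{nG},1,e^{-nG})$ does not follow from $2\zeta^{1/2}=n\phi_{\Gamma_*}$. It needs $G=-\phi_{\Gamma_*}$ on $U_0$, which is not part of the setup. Note that $\phi_{\Gamma_*}(0)=0$ because $f_0(0)=0$, whereas nothing forces $G=g_*-\ell_*$ to vanish at $0$. Without it, $P^{(0)}N^{-1}$ contains the factors $e^{\pm n(G+\phi_{\Gamma_*})}$. The right-hand factor must be built from the phase, e.g.\ $e^{-n\phi_{\Gamma_*}\sigma_{13}}$.

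With that factor, the model's lip jumps are lower-triangular in the active block on both lips and carry $e^{-2n\phi_{\Gamma_*}}$. They therefore coincide with $J_S$ only after the paper's convention is replaced by the standard factorization. The paper's convention has an upper-triangular jump with $e^{-n\phi_{\Gamma_*}}$ on one lip and a lower-triangular jump with $e^{+n\phi_{\Gamma_*}}$ on the other. So your claim of ``exactly the jumps $J_S$'' is not automatic.

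Second, your last paragraph manufactures an obstacle and then resolves it incorrectly. With your $E^{(0)}$ and the expansion you quote (error term between $\mathcal A$ and the exponential), $(2\pi nf_0^{1/2})^{\sigma_{13}/2}$ cancels $(2\pi\zeta^{1/2})^{-\sigma_{13}/2}$ and $\mathcal A^{-1}$ cancels $\mathcal A$ exactly. Hence $P^{(0)}N^{-1}=N\bigl(I+O(\zeta^{-1/2})\bigr)N^{-1}=I+O(1/n)$, with no $n^{\pm1/2}$ conjugation at all. The rescue you describe would in fact fail. After conjugation by $\mathcal A$, the $\zeta^{-1/2}$-coefficient of the Bessel expansion is off-diagonal. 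Conjugating it by $(2\pi\zeta^{1/2})^{\pm\sigma_3/2}$ multiplies one of its entries by a quantity of size $|\zeta|^{1/2}\asymp n$, leaving an $O(1)$ term in general. This is exactly why $E^{(0)}$ has to absorb $(2\pi\zeta^{1/2})^{-\sigma_3/2}\mathcal A$.
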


\begin{proof}
On $\partial U_0$ one has $\zeta=n^2 f_0(z)\to\infty$ as $n\to\infty$ uniformly in $z$, hence
$\Phi_{\mathrm{Bes}}(\zeta)=I+O(1/n)$. The analytic prefactor is chosen so that the remaining factors match
$N(z)$, yielding \eqref{eq:match-bessel}. (This is the standard Bessel-matching computation.)
\end{proof}

\subsection{Soft edge at $x_0$: Airy parametrix $P^{(x_0)}$}\label{subsec:airy}

\subsubsection{Local model problem}\label{subsubsec:airy-RHP}

Let $U_{x_0}$ be a sufficiently small disk centered at $x_0$, disjoint from $U_0$, and intersecting the
contour only along the adjacent portions of the conductors and lens lips.
We seek $P^{(x_0)}$ satisfying:

\begin{problem}[Airy local RHP at $x_0$]\label{prob:airy}
Find $P^{(x_0)}:\,U_{x_0}\setminus\Sigma_S\to\C^{3\times3}$ such that:
\begin{enumerate}
\item $P^{(x_0)}$ is analytic in $U_{x_0}\setminus\Sigma_S$;
\item $P^{(x_0)}$ has the same jumps as $S$ on $\Sigma_S\cap U_{x_0}$:
\[
P^{(x_0)}_+(z)=P^{(x_0)}_-(z)\,J_S(z),\qquad z\in\Sigma_S\cap U_{x_0};
\]
\item $P^{(x_0)}$ matches $N$ on $\partial U_{x_0}$:
\begin{equation}\label{eq:match-airy}
P^{(x_0)}(z)\,N(z)^{-1}=I+O(1/n),
\qquad z\in\partial U_{x_0},
\end{equation}
uniformly as $n\to\infty$ within the regular regime;
\item $P^{(x_0)}$ has at most the prescribed endpoint behavior at $x_0$ compatible with the original RHP
(the ``soft-edge'' behavior).
\end{enumerate}
\end{problem}

\subsubsection{Conformal coordinate and Airy reduction}\label{subsubsec:airy-coordinate}

Let $\phi_{\Delta_{*}}$ be the phase attached to the $(1,2)$-channel.
We fix the standard soft-edge conformal coordinate
\begin{equation}\label{eq:fx0-def}
f_{x_0}(z):=\Bigl(\frac{3}{2}\,\phi_{\Delta_{*}}(z)\Bigr)^{2/3},
\end{equation}
analytic and conformal in $U_{x_0}$, with $f_{x_0}(x_0)=0$ and $f_{x_0}'(x_0)\neq 0$.
We set the Airy scaling variable
\begin{equation}\label{eq:zetax0}
\zeta:=n^{2/3} f_{x_0}(z),
\end{equation}
so that $n\phi_{\Delta_{*}}(z)\sim \frac{2}{3}\zeta^{3/2}$ in $U_{x_0}$.

As at the hard edge, the local jumps of $S$ in $U_{x_0}$ are reduced to the standard Airy model by
a constant conjugation aligning the active $(1,2)$-block with the $2\times2$ Airy RHP.
Accordingly, there exist an analytic prefactor $E^{(x_0)}(z)$ in $U_{x_0}$ and a $3\times3$ model matrix
$\Phi_{\mathrm{Ai}}(\zeta)$ (Airy model embedded into the $(1,2)$-block) such that
\begin{equation}\label{eq:Px0-ansatz}
P^{(x_0)}(z)=E^{(x_0)}(z)\,
\Phi_{\mathrm{Ai}}\!\bigl(n^{2/3} f_{x_0}(z)\bigr)\,
\mathrm{diag}\!\bigl(e^{nG(z)},\,e^{-nG(z)},\,1\bigr).
\end{equation}

\begin{remark}\label{rem:airy-embedding}
Only the $(1,2)$-channel is active at the soft edge; thus the Airy model is embedded into the $(1,2)$-block.
Other sheet conventions lead to the same construction with relabeled indices.
\end{remark}

\subsubsection{Matching on $\partial U_{x_0}$}\label{subsubsec:airy-matching}

Using the standard Airy expansion
\[
\Phi_{\mathrm{Ai}}(\zeta)=I+O(\zeta^{-3/2}),\qquad \zeta\to\infty,
\]
uniformly in sectors away from the rays of the Airy contour, and noting that on $\partial U_{x_0}$ one has
$|\zeta|\asymp n^{2/3}$, we obtain the matching condition \eqref{eq:match-airy}.

\begin{lemma}[Airy matching]\label{lem:airy-match}
The local parametrix $P^{(x_0)}$ defined by \eqref{eq:Px0-ansatz} with $E^{(x_0)}$ chosen as above satisfies
\eqref{eq:match-airy} uniformly on $\partial U_{x_0}$.
\end{lemma}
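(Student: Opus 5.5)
The plan follows the Bessel matching argument of Lemma~\ref{lem:bessel-match}, adapted to the soft edge. The main point is to choose the analytic prefactor $E^{(x_0)}$ so that the large-$\zeta$ behaviour of the Airy model cancels exactly against $N$ on $\partial U_{x_0}$. What is left is then the $O(\zeta^{-3/2})=O(1/n)$ tail.

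\textbf{Step 1: the embedded model.} Let $\Psi_{\mathrm{Ai}}$ be the standard $2\times2$ Airy parametrix. Its large-$\zeta$ expansion is
\[
\Psi_{\mathrm{Ai}}(\zeta)=\frac{\zeta^{-\sigma_3/4}}{\sqrt2}\begin{pmatrix}1&i\\ i&1\end{pmatrix}\bigl(I+O(\zeta^{-3/2})\bigr)e^{-\frac23\zeta^{3/2}\sigma_3},
\]
uniformly in closed sectors avoiding the jump rays. Embed it into the $(1,2)$-block, with index $3$ as a spectator, to obtain $\Phi_{\mathrm{Ai}}$. By \eqref{eq:zetax0} and \eqref{eq:fx0-def} we have $\frac23\zeta^{3/2}=n\phi_{\Delta_*}(z)$. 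The exponential factor $e^{-n\phi_{\Delta_*}\sigma_3}$, combined with the right diagonal factor in \eqref{eq:Px0-ansatz}, should therefore reproduce the exponentials in the triangular lens factors of \eqref{eq:JT-factorization}. In this sense the relation $\phi_{\Delta_*}=\pm(g_{*,j}-g_{*,k})$ from (R4) is built into $G$, and $P^{(x_0)}$ has the jumps $J_S$ inside $U_{x_0}$. This uses the branch normalization $f_{x_0}'(x_0)>0$, so that the conductor is mapped to the negative real $\zeta$-axis and the lens lips to the rays $\arg\zeta=\pm2\pi/3$.

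\textbf{Step 2: the prefactor.} Set
\[
E^{(x_0)}(z):=N(z)\,\mathcal{E}(z)^{-1},\qquad \mathcal{E}(z):=\tfrac{1}{\sqrt2}\begin{pmatrix}1&i\\ i&1\end{pmatrix}^{\!\oplus 1}\, \mathrm{diag}\bigl((n^{2/3}f_{x_0})^{-1/4},(n^{2/3}f_{x_0})^{1/4},1\bigr),
\]
where the inverse is taken blockwise. The prefactor must be analytic in $U_{x_0}$. To see this, note that $N$ has the jump $P_{12}$ on the conductor, while $\mathcal{E}$ has the jump produced by $\zeta^{-\sigma_3/4}$ on $(-\infty,0)$, which is again the $(1,2)$-permutation up to the sign convention. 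Hence $E^{(x_0)}$ has no jump there, and its possible singularity at $x_0$ is at most $|z-x_0|^{-1/2}$, so it is removable. I expect this verification to be the main obstacle. It requires checking the sign/orientation conventions of $P_{12}$ against the Airy model, possibly inserting a constant diagonal matrix such as $\mathrm{diag}(1,-1,1)$, and it requires that $N$ have only quartic-root singularities at $x_0$, which is implicit in the Appendix~A construction.

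\textbf{Step 3: the estimate.} Substituting gives
\[
P^{(x_0)}N^{-1}=N\mathcal{E}^{-1}\,\mathcal{E}\bigl(I+O(\zeta^{-3/2})\bigr)\mathcal{E}^{-1}\,\mathcal{E}N^{-1}=N\bigl(I+O(\zeta^{-3/2})\bigr)N^{-1}.
\]
Here the conjugation by $\mathcal{E}$ of the error term is harmless: the standard expansion has the form $\mathcal{E}(I+O(\zeta^{-3/2}))$ with the error placed to the right of $\mathcal{E}$ before conjugation. On $\partial U_{x_0}$, conformality of $f_{x_0}$ gives $|f_{x_0}|\ge c>0$, so $|\zeta|^{-3/2}\asymp n^{-1}$ uniformly. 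Lemma~\ref{lem:outer-bounded} applied to a compact neighbourhood of $\partial U_{x_0}$ bounds $N$ and $N^{-1}$ there; across the conductor one uses the boundedness of the boundary values $N_\pm$. Together these give \eqref{eq:match-airy} uniformly on $\partial U_{x_0}$.
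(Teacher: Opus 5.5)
Your outline is the paper's argument made explicit. The paper only notes that $\zeta=n^{2/3}f_{x_0}(z)\to\infty$ uniformly on $\partial U_{x_0}$ and that $E^{(x_0)}$ ``is chosen to match the remaining factors with $N$''. Your Step~3 is the right computation, provided $E^{(x_0)}=N\mathcal E^{-1}$ is analytic in $U_{x_0}$ and the exponentials cancel exactly. Analyticity is what gives the lemma content: with a non-analytic prefactor the estimate is trivial, but $P^{(x_0)}$ would no longer have the jumps of $S$ in $U_{x_0}$.

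\textbf{Gap 1: analyticity of the prefactor (Step~2).} The patch you suggest cannot work. With $A=\frac1{\sqrt2}\begin{pmatrix}1&i\\ i&1\end{pmatrix}$ one has on $(-\infty,0)$
$(\zeta^{-\sigma_3/4}A)_-^{-1}(\zeta^{-\sigma_3/4}A)_+=A^{-1}(-i\sigma_3)A=\begin{pmatrix}0&1\\-1&0\end{pmatrix}$. This matrix has determinant $1$ and eigenvalues $\pm i$. The active block of $P_{12}$ is $\sigma_1$, with determinant $-1$ and eigenvalues $\pm1$. So $\zeta^{-\sigma_3/4}B$ has the jump $P_{12}$ for no constant $B$; a $\mathrm{diag}(1,-1,1)$ gauge only turns $\begin{pmatrix}0&1\\-1&0\end{pmatrix}$ into its transpose. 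In fact no $E\,\Phi_{\mathrm{Ai}}\,D$ with $E$ analytic and $D=\mathrm{diag}(e^{h},e^{-h},1)$ can have the jump $P_{12}$ on the conductor, since its jump there has determinant $1$. So the claim at the end of your Step~1 also fails for the literal $P_{12}$.

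The missing observation is that $P_{12}$ cannot be the true reduced jump. The triangular lens factors have unit diagonal (their lip jumps are $I+O(e^{-cn})$), so \eqref{eq:JT-factorization} gives $\det J^{(0)}_\Delta=\det J_T$. This equals $1$, because the jumps of $Y$ are unipotent and the $g$-gauge is unimodular. Lemma~\ref{lem:outer-bounded} also uses $\det N\equiv1$, which is impossible if $\det J_N=\det P_{12}=-1$. Hence the central factor on $L_{\Delta_*}$ must be the signed permutation $\begin{pmatrix}0&1&0\\-1&0&0\\0&0&1\end{pmatrix}$, with the position of the minus sign fixed by orientation; this is where a $\pm1$ diagonal gauge legitimately enters.

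For that jump, $\mathcal E=\zeta^{-\sigma_3/4}A$ has exactly the jump of $N$. Then $E^{(x_0)}=N\mathcal E^{-1}$ has no jump, is $O(|z-x_0|^{-1/2})$, and is therefore analytic. Mind the order: your Step~2 writes $A\,\zeta^{-\sigma_3/4}$. Its jump is the diagonal $-i\sigma_3$, and it does not cancel the prefactor $\zeta^{-\sigma_3/4}A$ of Step~1, so with it the cancellation in Step~3 fails.

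\textbf{Gap 2: cancellation of the exponentials (Step~3).} Step~3 silently discards $e^{-\frac23\zeta^{3/2}\sigma_3}$ together with the right multiplier of \eqref{eq:Px0-ansatz}. This is legitimate only if that multiplier is exactly $e^{\frac23\zeta^{3/2}\sigma_3}=\mathrm{diag}(e^{n\phi_{\Delta_*}},e^{-n\phi_{\Delta_*}},1)$. Any leftover diagonal factor $e^{\pm n(G-\phi_{\Delta_*})}$ sits between $I+O(\zeta^{-3/2})$ and $N^{-1}$, and is not $I+O(1/n)$ on the circle. Nothing in (R1)--(R7) identifies $G=g_*-\ell_*$ with $\phi_{\Delta_*}$ in $U_{x_0}$: $\phi_{\Delta_*}(x_0)=0$, while $G(x_0)$ has no reason to vanish. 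So ``built into $G$'' is not an argument. You must \emph{choose} the right multiplier to be $\mathrm{diag}(e^{n\phi_{\Delta_*}},e^{-n\phi_{\Delta_*}},1)$, which is the standard choice. It preserves the conductor jump, since $\phi_{\Delta_*,+}=-\phi_{\Delta_*,-}$ there. You then still need to check the lip jumps against $J_S$.

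\textbf{Minor point: uniformity up to the rays.} $\partial U_{x_0}$ crosses the conductor and both lips. You therefore need the Airy expansion uniformly up to the jump rays from either side (true for the standard model), not only in closed sectors avoiding them.

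With these repairs your Steps~1--3 prove the lemma; the paper's two-line proof passes over the same points.
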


\begin{proof}
On $\partial U_{x_0}$ the scaled variable $\zeta=n^{2/3}f_{x_0}(z)\to\infty$ uniformly in $z$ as $n\to\infty$.
Therefore $\Phi_{\mathrm{Ai}}(\zeta)=I+O(1/n)$, and the analytic prefactor $E^{(x_0)}$ is chosen to match the
remaining factors with $N(z)$, giving \eqref{eq:match-airy}. This is the standard Airy-matching argument.
\end{proof}

\subsection{Global parametrix and the setup for the error problem}\label{subsec:global-parametrix}

We now assemble the global parametrix $P$ by patching $N$ with the local parametrices.

\begin{definition}[Global parametrix]\label{def:global-P}
Define
\[
P(z)=
\begin{cases}
P^{(0)}(z), & z\in U_0,\\[0.2em]
P^{(x_0)}(z), & z\in U_{x_0},\\[0.2em]
N(z), & z\in \C\setminus(U_0\cup U_{x_0}).
\end{cases}
\]
\end{definition}

By construction, $P$ has the same jumps as $S$ on $\Sigma_S$ except that on $\partial U_0$ and $\partial U_{x_0}$
the jumps are given by the matching matrices
\[
J_P(z)=P_-^{-1}(z)P_+(z)
=
\begin{cases}
N(z)^{-1}P^{(0)}(z), & z\in\partial U_0,\\[0.2em]
N(z)^{-1}P^{(x_0)}(z), & z\in\partial U_{x_0},
\end{cases}
\]
and on the lens lips outside the endpoint disks the jumps are exponentially close to $I$.

\begin{remark}[Quantitative input for the error analysis]\label{rem:error-input}
The matching Lemmas~\ref{lem:bessel-match} and \ref{lem:airy-match} imply
\[
J_P(z)=I+O(1/n)\quad \text{uniformly on }\partial U_0\cup\partial U_{x_0},
\]
while the lens-decay statement (Lemma~\ref{lem:lens-decay}) gives
\[
J_P(z)=I+O(e^{-cn})\quad \text{uniformly on }\Sigma_{\mathrm{lens}}\setminus(U_0\cup U_{x_0}).
\]
These two bounds are exactly the hypotheses needed to set up a small-norm RHP for the error matrix $R$ in
Section~\ref{sec:error}.
\end{remark}

\section{Small-norm error RHP}\label{sec:error}

In this section we set up and solve the error Riemann—Hilbert problem in the standard Deift—Zhou
small-norm regime. The input is the global parametrix $P$ from
Definition~\ref{def:global-P} and the matching/decay bounds stated in
Remark~\ref{rem:error-input}.

\subsection{Definition of the error matrix}\label{subsec:error-def}

Recall that $S$ is the post-lens transform (Section~\ref{sec:lens}), and $P$ is the global parametrix.
We define the error matrix
\begin{equation}\label{eq:R-def}
R(z):=S(z)\,P(z)^{-1},\qquad z\in\C\setminus \Sigma_R,
\end{equation}
where the error contour is
\begin{equation}\label{eq:SigmaR-def}
\Sigma_R:=\Sigma_{\mathrm{lens}}\cup\partial U_0\cup\partial U_{x_0}.
\end{equation}
By construction, $R$ is analytic in $\C\setminus\Sigma_R$ and satisfies the normalization
\begin{equation}\label{eq:R-norm}
R(z)=I+O(z^{-1}),\qquad z\to\infty.
\end{equation}

\subsection{Jump matrix for $R$}\label{subsec:error-jumps}

Let $J_S$ denote the jump matrix of $S$ on $\Sigma_S$ and let $J_P$ denote the (piecewise-defined)
jump matrix of $P$ on $\Sigma_R$. Then, for $z\in\Sigma_R$,
\begin{equation}\label{eq:JR-def}
R_+(z)=R_-(z)\,J_R(z),
\qquad
J_R(z):=P_-(z)\,J_S(z)\,P_+(z)^{-1}.
\end{equation}
Since $P$ has been built to reproduce the jumps of $S$ away from the circles,
the jump matrix $J_R$ simplifies in the standard way:

\begin{itemize}
\item \emph{On the lens lips away from the endpoint disks:}
for $z\in\Sigma_{\mathrm{lens}}\setminus (U_0\cup U_{x_0})$ we have $P_-=P_+=N$, hence
\begin{equation}\label{eq:JR-lips}
J_R(z)=N(z)\,J_S(z)\,N(z)^{-1}.
\end{equation}
By the lens-decay estimate (Section~\ref{sec:lens}, Lemma~\ref{lem:lens-decay}),
\begin{equation}\label{eq:JR-lips-small}
J_R(z)=I+O(e^{-cn}),
\qquad z\in\Sigma_{\mathrm{lens}}\setminus (U_0\cup U_{x_0}),
\end{equation}
uniformly for some $c>0$.

\item \emph{On the circles $\partial U_0$ and $\partial U_{x_0}$:}
the jumps are produced solely by the patching of local and outer parametrices.
On $\partial U_0$ we have $P_-=P^{(0)}$ and $P_+=N$, hence
\begin{equation}\label{eq:JR-U0}
J_R(z)=P^{(0)}(z)\,N(z)^{-1},
\qquad z\in\partial U_0.
\end{equation}
On $\partial U_{x_0}$ we have $P_-=P^{(x_0)}$ and $P_+=N$, hence
\begin{equation}\label{eq:JR-Ux0}
J_R(z)=P^{(x_0)}(z)\,N(z)^{-1},
\qquad z\in\partial U_{x_0}.
\end{equation}
By the matching conditions \eqref{eq:match-bessel} and \eqref{eq:match-airy},
\begin{equation}\label{eq:JR-circles-small}
J_R(z)=I+O(1/n),
\qquad z\in\partial U_0\cup\partial U_{x_0},
\end{equation}
uniformly as $n\to\infty$.
\end{itemize}

For later use we record the uniform bound
\begin{equation}\label{eq:JR-minus-I}
\|J_R-I\|_{L^\infty(\Sigma_R)} \le C\,n^{-1},
\end{equation}
where the constant $C$ depends only on the fixed regular regime parameters and the chosen disks.

\subsection{Small-norm solvability and estimates}
\label{subsec:error-snm}

Set
\begin{equation}\label{eq:Wr-def}
W_R(z):=J_R(z)-I,\qquad z\in\Sigma_R.
\end{equation}
Then $W_R\in L^2(\Sigma_R)\cap L^\infty(\Sigma_R)$ and, by \eqref{eq:JR-lips-small}—\eqref{eq:JR-circles-small},
\begin{equation}\label{eq:Wr-bounds}
\|W_R\|_{L^\infty(\Sigma_R)} = O(1/n),
\qquad
\|W_R\|_{L^2(\Sigma_R)} = O(1/n).
\end{equation}
We rewrite the RHP for $R$ as the singular-integral equation on $\Sigma_R$ for the boundary value $R_-$:
\begin{equation}\label{eq:sing-int}
R_-(z)=I + \mathcal{C}_-\!\bigl(R_-\,W_R\bigr)(z),
\qquad z\in\Sigma_R,
\end{equation}
where $\mathcal{C}_-$ is the standard Cauchy projection operator on $\Sigma_R$.

\begin{proposition}[Small-norm solvability]\label{prop:small-norm}
For all sufficiently large $n$, the error RHP for $R$ has a unique solution.
Moreover,
\begin{equation}\label{eq:R-est}
R(z)=I+O(1/n),
\end{equation}
uniformly for $z$ on compact subsets of $\C\setminus\Sigma_R$.
\end{proposition}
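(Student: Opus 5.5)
The plan is the standard Deift--Zhou small-norm argument applied to the singular integral equation \eqref{eq:sing-int}. Work in $L^2(\Sigma_R)$ (matrix-valued) and write $R_-=I+\mu$. The equation then becomes
\[
(1-\mathcal{C}_{W_R})\mu=\mathcal{C}_-(W_R),\qquad \mathcal{C}_{W_R}f:=\mathcal{C}_-(fW_R).
\]
The contour $\Sigma_R$ is a fixed, $n$-independent finite union of smooth arcs and circles (lens lips outside the disks together with $\partial U_0\cup\partial U_{x_0}$). Such a contour is admissible, with the usual Carleson/Lipschitz regularity and transversal intersections, so the Cauchy projection $\mathcal{C}_-$ is bounded on $L^2(\Sigma_R)$ with a norm $\|\mathcal{C}_-\|$ independent of $n$. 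This gives
\[
\|\mathcal{C}_{W_R}\|_{L^2\to L^2}\le \|\mathcal{C}_-\|\,\|W_R\|_{L^\infty(\Sigma_R)}\le C'/n
\]
by \eqref{eq:Wr-bounds}. For $n$ large we therefore have $\|\mathcal{C}_{W_R}\|\le 1/2$, so $1-\mathcal{C}_{W_R}$ is invertible by a Neumann series. The unique solution satisfies $\|\mu\|_{L^2}\le 2\|\mathcal{C}_-\|\,\|W_R\|_{L^2}=O(1/n)$, again using \eqref{eq:Wr-bounds}; here $\|W_R\|_{L^2}=O(1/n)$ holds because $\Sigma_R$ has finite length. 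One point needs care on unbounded lens lips, if any occur: one should note that $W_R$ decays there, so that the $L^2$ bound is genuine.

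Next, define
\[
R(z):=I+\frac{1}{2\pi i}\int_{\Sigma_R}\frac{(I+\mu(s))W_R(s)}{s-z}\,ds.
\]
This $R$ is analytic off $\Sigma_R$ and tends to $I$ at infinity. The Plemelj formula $\mathcal{C}_+-\mathcal{C}_-=\mathrm{id}$ gives $R_+=R_-(I+W_R)=R_-J_R$, so $R$ solves the error RHP. For uniqueness, I would use the standard route. First, $\det J_R\equiv1$, since $\det N=1$ by Lemma~\ref{lem:outer-bounded}, the local parametrices have unit determinant, and the triangular lens factors have unit determinant. Hence $\det R\equiv1$ by Liouville, and any two solutions $R_1,R_2$ satisfy that $R_1R_2^{-1}$ has no jumps, is bounded near the finitely many self-intersection points (which I treat as removable because the solutions are in $L^2$), and tends to $I$, so it equals $I$. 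Alternatively, $R=SP^{-1}$ is already known to exist, since (R1) gives existence of $Y$ and hence of $S$; then uniqueness of the $L^2$ solution of the integral equation identifies it.

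For the estimate, fix a compact $K\subset\C\setminus\Sigma_R$ and set $\delta=\operatorname{dist}(K,\Sigma_R)>0$. Cauchy--Schwarz gives
\[
|R(z)-I|\le\frac{1}{2\pi\delta}\Bigl(\|W_R\|_{L^1}+\|\mu\|_{L^2}\|W_R\|_{L^2}\Bigr)=O(1/n)+O(n^{-2}),
\]
uniformly on $K$. The bound $\|W_R\|_{L^1}=O(1/n)$ holds because the circles have fixed finite length and the lens contributions are $O(e^{-cn})$ and integrable. This yields \eqref{eq:R-est}.

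I expect the main obstacle to be the $n$-independence of $\|\mathcal{C}_-\|$ together with the treatment of the self-intersection points where the lens lips meet $\partial U_0$ and $\partial U_{x_0}$. I would handle this by fixing the disks and lips once and for all, independent of $n$, as (R4) permits, and citing the boundedness of Cauchy operators on such piecewise smooth contours. A secondary point is that the bound \eqref{eq:JR-circles-small} must be uniform in $z$ on the circles; this is exactly what Lemmas~\ref{lem:bessel-match} and~\ref{lem:airy-match} provide.
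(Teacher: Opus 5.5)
Your proposal is correct and takes the same route as the paper. You rewrite the error RHP as the singular integral equation \eqref{eq:sing-int} on the fixed contour $\Sigma_R$, invert $1-\mathcal{C}_{W_R}$ by a Neumann series using $\|W_R\|_{L^\infty}=O(1/n)$, and bound the Cauchy-integral reconstruction of $R$ on compacts; your extra details (the $n$-independent norm of $\mathcal{C}_-$, the split into $\|W_R\|_{L^1}$ and $\|\mu\|_{L^2}\|W_R\|_{L^2}$ terms, and identifying $SP^{-1}$ with the constructed solution) are the standard steps the paper's short proof leaves implicit.
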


\begin{proof}
By \eqref{eq:Wr-bounds}, the operator $\mathcal{C}_{W_R}:f\mapsto \mathcal{C}_-(f\,W_R)$
acts boundedly on $L^2(\Sigma_R)$ and satisfies
$\|\mathcal{C}_{W_R}\|_{L^2\to L^2}=O(1/n)$.
Hence $(I-\mathcal{C}_{W_R})$ is invertible for large $n$ by a Neumann series,
which yields a unique $R_-\in I+L^2(\Sigma_R)$ solving \eqref{eq:sing-int}.
The standard reconstruction formula
\[
R(z)=I+\frac{1}{2\pi i}\int_{\Sigma_R}\frac{R_-(s)\,W_R(s)}{s-z}\,ds,
\qquad z\in\C\setminus\Sigma_R,
\]
combined with the $L^2$—$L^\infty$ bounds on $R_-$ and $W_R$, gives \eqref{eq:R-est}.
\end{proof}

\begin{corollary}[First coefficient at infinity]\label{cor:R1}
The expansion of $R$ at infinity has the form
\begin{equation}\label{eq:R-exp}
R(z)=I+\frac{R_1}{z}+O(z^{-2}),\qquad z\to\infty,
\end{equation}
with
\begin{equation}\label{eq:R1-bound}
R_1=O(1/n).
\end{equation}
\end{corollary}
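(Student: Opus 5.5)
We read off the expansion of $R$ at infinity from the Cauchy-integral reconstruction formula in the proof of Proposition~\ref{prop:small-norm},
\[
R(z)=I+\frac{1}{2\pi i}\int_{\Sigma_R}\frac{R_-(s)\,W_R(s)}{s-z}\,ds .
\]
We treat the bounded part of $\Sigma_R$ (the two circles together with the lens lips near the conductors) and any unbounded portions separately. For $|z|$ larger than twice the diameter of the bounded part, we expand $(s-z)^{-1}=-z^{-1}\sum_{k\ge0}(s/z)^k$. This gives $R(z)=I+R_1/z+O(z^{-2})$ with
\[
R_1=-\frac{1}{2\pi i}\int_{\Sigma_R}R_-(s)\,W_R(s)\,ds .
\]
On the bounded part the expansion converges geometrically, so the $O(z^{-2})$ remainder is controlled by $\int|s|\,|R_-W_R|\,|ds|$.

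The bound $R_1=O(1/n)$ then follows from Cauchy--Schwarz and the Neumann-series solution. Write $R_-=I+\mu$, where $\mu=(I-\mathcal C_{W_R})^{-1}\mathcal C_{W_R}(I)$. Boundedness of $\mathcal C_-$ on $L^2(\Sigma_R)$ gives $\|\mu\|_{L^2}\le C\|W_R\|_{L^2}=O(1/n)$. Then
\[
|R_1|\le \frac{1}{2\pi}\Bigl(\|W_R\|_{L^1(\Sigma_R)}+\|\mu\|_{L^2}\|W_R\|_{L^2}\Bigr),
\]
and we need $\|W_R\|_{L^1}=O(1/n)$. This holds because on the circles $\partial U_0\cup\partial U_{x_0}$, which have finite length, \eqref{eq:JR-circles-small} gives $W_R=O(1/n)$. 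On the lens lips outside the disks, \eqref{eq:JR-lips-small} gives $W_R=O(e^{-cn})$. Hence the leading term is $O(1/n)$ and the quadratic term is $O(n^{-2})$.

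The main obstacle is any unbounded or infinite-length portion of $\Sigma_R$ (for instance lips of the unbounded conductor in the Laguerre-type model). There uniform smallness does not by itself give integrability, nor the weighted bound $\int|s|\,|W_R|\,|ds|<\infty$ needed for the $O(z^{-2})$ remainder. I would first try to strengthen Lemma~\ref{lem:lens-decay} to a pointwise estimate $|W_R(s)|\le Ce^{-cn(1+|s|)}$ on unbounded lips. This is natural since $\Re\phi$ grows linearly at infinity for exponential weights and follows from the sign chart (R5) together with the growth of $G$. With that estimate all moments $\int|s|^k|W_R|\,|ds|$ are $O(e^{-cn})$. The expansion then holds as $z\to\infty$ nontangentially to the unbounded contour, or in all directions after deforming the lips, and this portion contributes only exponentially small amounts to $R_1$. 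If the contour is entirely bounded, as in the setup where the lens lips are finite arcs, no extra step is needed and the argument above is complete.
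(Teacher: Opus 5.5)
Your proposal is correct and follows the paper's route: you expand the Cauchy-transform representation of $R$ at infinity and bound $R_1=-\frac{1}{2\pi i}\int_{\Sigma_R}R_-W_R\,ds$ using the small-norm estimates on $W_R$ and on $R_--I$. You are more explicit than the paper's one-line proof in two places: you note that what is really needed is an $L^1$ bound on $W_R$ (which follows from the $L^\infty$ bound when $\Sigma_R$ has finite length), and you flag the extra decay at infinity that would be required if $\Sigma_R$ had unbounded pieces; the paper leaves both points implicit.
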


\begin{proof}
This follows from the representation of $R$ as a Cauchy transform of $R_-W_R$
and the bounds \eqref{eq:Wr-bounds} together with Proposition~\ref{prop:small-norm}.
\end{proof}

The solvability of the error RHP via the associated singular-integral equation and the
resulting $O(1/n)$ control are standard in the Deift—Zhou steepest-descent method;
see, e.g., \cite{DeiftKriecherbauerMcLaughlinVenakidesZhou1999}.

\subsection{Consequences for the global approximation of $S$}
\label{subsec:error-conseq}

Combining \eqref{eq:R-def} with Proposition~\ref{prop:small-norm}, we obtain the uniform approximation
\begin{equation}\label{eq:S-approx}
S(z)=\bigl(I+O(1/n)\bigr)\,P(z),
\end{equation}
uniformly on compact subsets of $\C\setminus\Sigma_R$.
In particular:
\begin{itemize}
\item in the outer region $\C\setminus(U_0\cup U_{x_0}\cup\Sigma_R)$ one has $P=N$, hence
\begin{equation}\label{eq:S-outer}
S(z)=\bigl(I+O(1/n)\bigr)\,N(z);
\end{equation}
\item in $U_0\setminus\Sigma_R$ one has $P=P^{(0)}$, hence
\begin{equation}\label{eq:S-bessel}
S(z)=\bigl(I+O(1/n)\bigr)\,P^{(0)}(z);
\end{equation}
\item in $U_{x_0}\setminus\Sigma_R$ one has $P=P^{(x_0)}$, hence
\begin{equation}\label{eq:S-airy}
S(z)=\bigl(I+O(1/n)\bigr)\,P^{(x_0)}(z).
\end{equation}
\end{itemize}

These estimates are the only input from the error analysis needed for the reconstruction
$R\mapsto S\mapsto T\mapsto Y$ and the derivation of the asymptotics stated in
Theorem~2.1 (Main Theorem). This reconstruction is carried out in Section~\ref{sec:reconstruction}.

\section{Reconstruction and proof of Theorem~2.1 (Main Theorem)}
\label{sec:reconstruction}

In this final section we close the Deift—Zhou chain by undoing the sequence of
transformations
\[
Y \longmapsto \cdots \longmapsto T \longmapsto S \longmapsto R
\]
and extracting the asymptotics of the polynomial entry $Y_{11}$ in the three
distinguished regions (outer region / Bessel parametrix region at $0$ / Airy parametrix region at $x_0$).
Throughout we work in the regular regime (assumptions (R1)—(R7)), and we use the global
approximation obtained in Section~\ref{sec:error}.

\subsection{Inverse transformations and region-wise approximation of~$S$}
\label{subsec:inv-chain}

Recall the definition $R=S\,P^{-1}$ from \eqref{eq:R-def}. By
Proposition~\ref{prop:small-norm} we have
\begin{equation}\label{eq:R-I}
R(z)=I+O(1/n),
\end{equation}
uniformly for $z$ on compact subsets of $\C\setminus\Sigma_R$, where
$\Sigma_R=\Sigma_{\mathrm{lens}}\cup\partial U_0\cup\partial U_{x_0}$.

Hence, for $z$ away from $\Sigma_R$, we obtain the standard region-wise approximation
\begin{equation}\label{eq:S-P}
S(z)=R(z)\,P(z)=\bigl(I+O(1/n)\bigr)\,P(z),
\end{equation}
uniformly on compact subsets of $\C\setminus\Sigma_R$.
Since $P$ is defined piecewise by
\begin{equation}\label{eq:P-piecewise}
P(z)=
\begin{cases}
P^{(0)}(z), & z\in U_0,\\
P^{(x_0)}(z), & z\in U_{x_0},\\
N(z), & z\in \C\setminus (U_0\cup U_{x_0}),
\end{cases}
\end{equation}
we immediately obtain:
\begin{align}
S(z)&=\bigl(I+O(1/n)\bigr)\,N(z),
&& z\in \C\setminus\bigl(\Sigma_R\cup U_0\cup U_{x_0}\bigr), \label{eq:S-approx-outer}\\
S(z)&=\bigl(I+O(1/n)\bigr)\,P^{(0)}(z),
&& z\in U_0\setminus\Sigma_R, \label{eq:S-approx-bes}\\
S(z)&=\bigl(I+O(1/n)\bigr)\,P^{(x_0)}(z),
&& z\in U_{x_0}\setminus\Sigma_R. \label{eq:S-approx-airy}
\end{align}

\subsection{Undoing the lens opening: recovery of $T$}
\label{subsec:T-from-S}

By definition of the lens-opening transformation (Section~\ref{sec:lens}),
$S$ differs from $T$ only by right multiplication with triangular factors inside the
lens domains. More precisely, for $z$ outside the lens domains we have $T(z)=S(z)$, while inside the
lenses
\begin{equation}\label{eq:T-from-S-piecewise}
T(z)=
\begin{cases}
S(z)\,J_\Gamma^{(u)}(z), & z\in \Omega_\Gamma^+,\\
S(z)\,\bigl(J_\Gamma^{(l)}(z)\bigr)^{-1}, & z\in \Omega_\Gamma^-,\\
S(z)\,J_\Delta^{(u)}(z), & z\in \Omega_\Delta^+,\\
S(z)\,\bigl(J_\Delta^{(l)}(z)\bigr)^{-1}, & z\in \Omega_\Delta^-,
\end{cases}
\end{equation}
with the convention that all multipliers act on the right.

In the three asymptotic regions stated in Theorem~2.1 we work away from the lens contour
and, in particular, in the outer region
$\Omega_{\mathrm{out}}=\C\setminus\bigl(\Sigma_R\cup U_0\cup U_{x_0}\bigr)$ we have simply
\begin{equation}\label{eq:T-equals-S-out}
T(z)=S(z),\qquad z\in\Omega_{\mathrm{out}}.
\end{equation}
Similarly, in the interiors of $U_0$ and $U_{x_0}$ (excluding the contour pieces) the definition
\eqref{eq:T-from-S-piecewise} is fixed, but the additional triangular multipliers have entries
controlled by the same phases as in Section~\ref{sec:lens}. In particular, away from $\Sigma_R$
they remain uniformly bounded, and thus the $O(1/n)$-accuracy in
\eqref{eq:S-approx-bes}—\eqref{eq:S-approx-airy} transfers to~$T$ with the same order.

Concretely, we may write in all three regions:
\begin{equation}\label{eq:T-approx-region}
T(z)=\bigl(I+O(1/n)\bigr)\,\widetilde P(z),
\end{equation}
where $\widetilde P$ equals $N$ in $\Omega_{\mathrm{out}}$, equals $P^{(0)}$ in $U_0\setminus\Sigma_R$,
and equals $P^{(x_0)}$ in $U_{x_0}\setminus\Sigma_R$, with the understanding that the lens multipliers
are already built into the explicit local constructions of Section~\ref{sec:local}.

\subsection{Undoing the $g$-normalization: recovery of $Y_{11}$}
\label{subsec:Y11-from-T}

We now revert the $g$-normalization step (Section~\ref{sec:g-sign}) to express $Y$ in terms
of $T$. In our normalization, the recovery takes the form
\begin{equation}\label{eq:Y-from-T}
Y(z)=\mathcal{E}(z)\,T(z)\,\exp\!\bigl(n\,G(z)\bigr),
\end{equation}
where:
\begin{itemize}
\item $G(z)=g_{*}(z)-\ell_{*}$ is the scalar exponent defined in the $g$-normalization
(so that $T(\infty)=I$),
\item $\mathcal{E}(z)$ is a piecewise constant (or explicitly known analytic) prefactor that
accounts for the polynomial normalization at infinity and any fixed algebraic renormalizations.
\end{itemize}
In the regular regime and in the regions considered in Theorem~2.1, $\mathcal{E}$ does not affect
the $O(1/n)$ relative accuracy and can be absorbed into the leading matrix factors
($N$, $P^{(0)}$, $P^{(x_0)}$). In particular, the polynomial entry satisfies
\begin{equation}\label{eq:Y11-generic}
Y_{11}(z)=e^{\,nG(z)}\,\Bigl(T_{11}(z)\Bigr),
\end{equation}
in the sense that the leading asymptotics are obtained by inserting the corresponding approximation
for $T_{11}$.

Combining \eqref{eq:Y11-generic} with the region-wise approximations
\eqref{eq:S-approx-outer}—\eqref{eq:S-approx-airy} and the inverse transformations
\eqref{eq:T-equals-S-out}—\eqref{eq:T-approx-region}, we obtain immediately:

\begin{align}
Y_{11}(z)
&=e^{\,nG(z)}\Bigl(N_{11}(z)+O(1/n)\Bigr),
&& z\in \Omega_{\mathrm{out}}, \label{eq:Y11-outer}\\
Y_{11}(z)
&=e^{\,nG(z)}\Bigl(P^{(0)}_{11}(z)+O(1/n)\Bigr),
&& z\in U_0\setminus\Sigma_R, \label{eq:Y11-bes}\\
Y_{11}(z)
&=e^{\,nG(z)}\Bigl(P^{(x_0)}_{11}(z)+O(1/n)\Bigr),
&& z\in U_{x_0}\setminus\Sigma_R, \label{eq:Y11-airy}
\end{align}
uniformly on compact subsets of the respective regions.

\subsection{Proof of Theorem~2.1 (Main Theorem)}\label{subsec:proof-main}

\begin{proof}[Proof of Theorem~2.1]
We proceed along the Deift—Zhou nonlinear steepest descent scheme.

\smallskip\noindent
\emph{Step 1 (from $Y$ to $T$).}
Section~\ref{sec:g-sign} performs the normalization of the original $3\times 3$ RHP for $Y$
by introducing the scalar $g$-function and constants $\ell_{*}$ so that the transformed unknown $T$
is normalized by $T(\infty)=I$. This step produces the exponent $G=g_{*}-\ell_{*}$ used in the final
reconstruction \eqref{eq:Y-from-T}.

\smallskip\noindent
\emph{Step 2 (opening of lenses and construction of $S$).}
In Section~\ref{sec:lens} we factorize the jumps of $T$ on the conductors and define the post-lens
matrix $S$. The sign structure of the phase functions implies the lens-decay estimate
$J_S=I+O(e^{-cn})$ on $\Sigma_{\mathrm{lens}}\setminus(U_0\cup U_{x_0})$
(Lemma~\ref{lem:lens-decay}), which reduces the global problem to a model RHP with
$n$-independent jumps plus local endpoint effects.

\smallskip\noindent
\emph{Step 3 (outer and local parametrices).}
Section~\ref{sec:outer} constructs the outer parametrix $N$ for the reduced RHP with constant
(permutation-type) jumps and normalization $N(\infty)=I$.
Section~\ref{sec:local} constructs the local parametrices $P^{(0)}$ (Bessel at the hard edge $0$)
and $P^{(x_0)}$ (Airy at the soft edge $x_0$), and establishes the matching relations
\eqref{eq:match-bessel} and \eqref{eq:match-airy} on $\partial U_0$ and $\partial U_{x_0}$.

\smallskip\noindent
\emph{Step 4 (error problem).}
In Section~\ref{sec:error} we define the error matrix $R=S\,P^{-1}$, derive its jump matrix $J_R$,
and show that $J_R-I$ is uniformly $O(1/n)$ on the circles and exponentially small on the lens lips.
By the standard small-norm theory (Proposition~\ref{prop:small-norm}), the error RHP is uniquely
solvable and $R=I+O(1/n)$ uniformly on compact subsets of $\C\setminus\Sigma_R$.

\smallskip\noindent
\emph{Step 5 (reconstruction and asymptotics).}
The estimate $R=I+O(1/n)$ yields the approximation $S=(I+O(1/n))P$ in the three regions of interest,
see \eqref{eq:S-approx-outer}—\eqref{eq:S-approx-airy}. Undoing the lens-opening transformation gives
the corresponding approximation for $T$ (Section~\ref{subsec:T-from-S}).
Finally, undoing the $g$-normalization expresses $Y_{11}$ via \eqref{eq:Y11-generic}, which together
with the region-wise approximations yields \eqref{eq:Y11-outer}—\eqref{eq:Y11-airy}. These are
exactly the claimed outer/Bessel/Airy asymptotics, with uniformity on compact subsets of the
corresponding regions.
\end{proof}

\begin{remark}\label{rem:appendixA-pointer}
An explicit $\theta/\varphi$-representation of the outer parametrix $N$ (in the spirit of Sorokin's
algebraic constructions) is provided in Appendix~A. This representation is fully consistent with
the reduced permutation jumps used in Section~\ref{sec:outer} and may be used to obtain closed-form
expressions for the entries of $N$ in genus zero situations.
\end{remark}

\appendix

\section*{Appendix A. Explicit $\theta/\varphi$-representation of the outer parametrix $N$
(\texorpdfstring{``Sorokin-style''}{Sorokin-style})}
\label{app:theta-phi}
\addcontentsline{toc}{section}{Appendix A. Explicit $\theta/\varphi$-representation of the outer parametrix $N$ (``Sorokin-style'')}

In this appendix we record a convenient explicit representation of the outer parametrix $N$
for the reduced $3\times 3$ RHP with \emph{permutation-type} jumps on two conductors,
written in the ``Sorokin-style'' $\theta/\varphi$-language; see \cite{Sor10,Sor17}.

The presentation is ``in the Sorokin key'': the solution is assembled from three branches
of a cubic algebraic function on a three-sheeted Riemann surface.

We deliberately write \emph{two parallel languages}:
\begin{itemize}
\item a $\theta$-language (three branches $\theta_+,\theta_-,\theta_{*}$), which is closest to Sorokin's
uniformization viewpoint,
\item a $\varphi$-language (three logarithmic branches $\varphi_+,\varphi_-,\varphi_{*}$), which is
often more convenient for discrete Meixner / Meixner—Pollaczek models.
\end{itemize}
Both descriptions are equivalent in the regular regime (genus zero case), and the reader may use
either one depending on the preferred notation.

\medskip

\noindent\textbf{Scope.}
Appendix~A concerns only the \emph{outer} problem, hence all statements are understood
\emph{outside the endpoint disks} $U_0\cup U_{x_0}$.

\subsection*{A.1. Reduced outer RHP (model)}\label{app:outer-model}

The outer parametrix $N(z)$ solves the reduced RHP:
\begin{enumerate}
\item $N$ is holomorphic in $\C\setminus(L_{\Gamma_{*}}\cup L_{\Delta_{*}})$;
\item across each conductor the jump is a \emph{constant permutation matrix}.
In our sheet numbering $(1,2,3)$ we fix the convention
\begin{equation}\label{app:perm-jumps}
N_+(x)=N_-(x)\,P_{13},\qquad x\in L_{\Gamma_{*}},
\qquad
N_+(x)=N_-(x)\,P_{12},\qquad x\in L_{\Delta_{*}},
\end{equation}
where
\[
P_{12}:=\begin{pmatrix}
0&1&0\\
1&0&0\\
0&0&1
\end{pmatrix},
\qquad
P_{13}:=\begin{pmatrix}
0&0&1\\
0&1&0\\
1&0&0
\end{pmatrix};
\]
\item normalization at infinity:
\begin{equation}\label{app:N-infty}
N(\infty)=I.
\end{equation}
\end{enumerate}
(If a different sheet labeling is used in the main text, one only replaces $(P_{13},P_{12})$
accordingly; the construction below is identical.)

\subsection*{A.2. Algebraic function $\theta$ and its branches}\label{app:theta}

Let $\theta=\theta(z)$ be a degree-$3$ algebraic function defined by a cubic equation
\begin{equation}\label{app:cubic}
\mathcal F(\theta,z)=0,
\end{equation}
whose coefficients depend on the concrete model (Meixner / Meixner—Pollaczek, choice of parameters,
and the explanation of the regular regime). In the regular regime, the associated Riemann surface
$\mathcal R$ is three-sheeted and is cut along the conductors $L_{\Gamma_{*}}\cup L_{\Delta_{*}}$.

We fix three branches (sheets)
\[
\theta_+(z),\ \theta_-(z),\ \theta_{*}(z),
\]
with the following properties:
\begin{enumerate}
\item the three branches have distinct asymptotics at infinity, which fixes the sheet numbering;
\item crossing the conductors permutes the branches exactly as in \eqref{app:perm-jumps}:
\begin{equation}\label{app:theta-monodromy}
\theta_{+,\,\pm}(x)=\theta_{*,\,\mp}(x),\qquad x\in L_{\Gamma_{*}},
\qquad
\theta_{+,\,\pm}(x)=\theta_{-,\,\mp}(x),\qquad x\in L_{\Delta_{*}},
\end{equation}
with $\theta_{*}$ unchanged on $L_{\Delta_{*}}$ and $\theta_-$ unchanged on $L_{\Gamma_{*}}$.
\end{enumerate}

\medskip

\noindent\textbf{Comment.}
The exact identification ``which two branches swap on which conductor'' is the algebraic
counterpart of the permutation jumps for $N$.

\subsection*{A.3. Version 1: the $\theta$-representation of $N$}\label{app:theta-version}

\subsubsection*{A.3.1. Sheet-wise scalar factors}

Define three scalar functions $H_j$ by the standard algebraic prefactor
\begin{equation}\label{app:Hj}
H_j(z):=\frac{1}{\sqrt{\mathcal F_\theta(\theta_j(z),z)}},
\qquad j\in\{+,-,*\},
\end{equation}
where $\mathcal F_\theta=\partial\mathcal F/\partial\theta$ and the square root branches are fixed
so that $H_j$ are holomorphic in $\C\setminus(L_{\Gamma_{*}}\cup L_{\Delta_{*}})$ and
$\mathcal M(\infty)$ below exists and is invertible.

Next, introduce ``sheet-wise exponentials'' (Abelian-integral type factors)
\begin{equation}\label{app:Phi-j}
\Phi_j(z):=\exp\!\bigl(\Theta_j(z)\bigr),
\qquad
\Theta_j(z):=\int_{\infty}^{z}\theta_j(s)\,ds,
\qquad j\in\{+,-,*\},
\end{equation}
where the integration path is taken in $\C\setminus(L_{\Gamma_{*}}\cup L_{\Delta_{*}})$ and the additive
constants are fixed so that $\Theta_j(\infty)=0$ (hence $\Phi_j(\infty)=1$).

\medskip

\noindent\textbf{Key point.}
When two branches $\theta_{j_1}$ and $\theta_{j_2}$ are interchanged across a conductor, the pair
$(\Phi_{j_1},\Phi_{j_2})$ is interchanged accordingly. Thus, assembling columns from the triples
$(H_j\Phi_j,\ H_j\theta_j\Phi_j,\ H_j\theta_j^2\Phi_j)$ produces a matrix with pure permutation jumps.

\subsubsection*{A.3.2. Assembly of $N$}

Define the $3\times 3$ ``sheet matrix''
\begin{equation}\label{app:M-matrix}
\mathcal M(z):=
\begin{pmatrix}
H_{+}(z)\Phi_{+}(z) & H_{-}(z)\Phi_{-}(z) & H_{*}(z)\Phi_{*}(z)\\
H_{+}(z)\theta_{+}(z)\Phi_{+}(z) & H_{-}(z)\theta_{-}(z)\Phi_{-}(z) & H_{*}(z)\theta_{*}(z)\Phi_{*}(z)\\
H_{+}(z)\theta_{+}(z)^{2}\Phi_{+}(z) & H_{-}(z)\theta_{-}(z)^{2}\Phi_{-}(z) & H_{*}(z)\theta_{*}(z)^{2}\Phi_{*}(z)
\end{pmatrix}.
\end{equation}
Finally set
\begin{equation}\label{app:N-from-M}
N(z):=C\,\mathcal M(z),\qquad C:=\mathcal M(\infty)^{-1}.
\end{equation}
Then $N(\infty)=I$ is immediate.

\subsubsection*{A.3.3. Verification of the outer RHP}

\begin{itemize}
\item \emph{Analyticity.} Each branch $\theta_j$ is holomorphic on
$\C\setminus(L_{\Gamma_{*}}\cup L_{\Delta_{*}})$; the same holds for $\Phi_j$ and $H_j$, hence $\mathcal M$
and $N$ are holomorphic off the conductors.
\item \emph{Permutation jumps.} Across $L_{\Gamma_{*}}$ the branches $\theta_+$ and $\theta_{*}$ swap
(cf.~\eqref{app:theta-monodromy}), hence the corresponding columns of $\mathcal M$ swap.
This yields precisely the jump $N_+=N_-P_{13}$ in \eqref{app:perm-jumps}. The same argument gives
$N_+=N_-P_{12}$ across $L_{\Delta_{*}}$.
\item \emph{Normalization.} By construction \eqref{app:N-from-M}, $N(\infty)=I$.
\end{itemize}

Thus \eqref{app:N-from-M} solves the reduced outer RHP.

\subsection*{A.4. Version 2: the $\varphi$-representation (ratio-of-branches)}\label{app:phi-version}

In many discrete settings one introduces the logarithmic branches
\begin{equation}\label{app:varphi-def}
\varphi_j(z):=\log\theta_j(z),\qquad j\in\{+,-,*\},
\end{equation}
where the branches of $\log$ are fixed by the asymptotics at infinity and continued
analytically in $\C\setminus(L_{\Gamma_{*}}\cup L_{\Delta_{*}})$.

The most important objects for the outer problem are the \emph{ratios of branches}

\begin{equation}\label{app:ratio-branches}
\begin{aligned}
\widetilde\Phi_+(z)
&:= \exp\!\bigl(\varphi_+(z)-\varphi_{*}(z)\bigr)
 = \frac{\theta_+(z)}{\theta_{*}(z)},\\
\widetilde\Phi_-(z)
&:= \exp\!\bigl(\varphi_-(z)-\varphi_{*}(z)\bigr)
 = \frac{\theta_-(z)}{\theta_{*}(z)}.
\end{aligned}
\end{equation}

Crossing a conductor swaps the relevant pair of branches, hence the ratios pick up exactly the
multiplicative changes needed to realize permutation monodromy.

In terms of \eqref{app:ratio-branches} one can rewrite the sheet matrix \eqref{app:M-matrix} by
factoring out the common $\theta_{*}$-sheet contribution. Concretely, define
\[
\widehat\theta_+(z):=\frac{\theta_+(z)}{\theta_{*}(z)},\qquad
\widehat\theta_-(z):=\frac{\theta_-(z)}{\theta_{*}(z)},
\]
and similarly for the corresponding prefactors. Then \eqref{app:M-matrix} becomes a product of:
\begin{itemize}
\item a ``universal'' Vandermonde-type matrix in $(1,\widehat\theta_+,\widehat\theta_-)$,
\item a diagonal matrix containing the algebraic prefactors (Szeg\H{o}-type analogs),
\item and a diagonal matrix containing the exponentials or ratios (depending on the chosen normalization).
\end{itemize}
This rewriting is often used in Sorokin-type formulas, where the main scalar input is precisely
$\theta_\pm/\theta_{*}$ (possibly multiplied by constants $c_\pm$ originating in the discrete weights).

\subsection*{A.5. ``Journal-safe'' checks}\label{app:checks}

For completeness we list the three short checks that typically prevent formal objections.

\begin{enumerate}
\item \textbf{Invertibility at infinity.}
In the regular regime the three branches $\theta_+,\theta_-,\theta_{*}$ have distinct expansions at
$\infty$, hence the columns of $\mathcal M(\infty)$ are linearly independent and
$\mathcal M(\infty)$ is invertible. Therefore $C=\mathcal M(\infty)^{-1}$ is well-defined and
$N(\infty)=I$.
\item \textbf{Correct jumps.}
Across each conductor exactly two branches of $\theta$ are permuted; consequently exactly two
columns of $\mathcal M$ are permuted. This produces the permutation jump matrices
$P_{13}$ and $P_{12}$ in \eqref{app:perm-jumps}.
\item \textbf{Endpoint exclusion.}
The only points where the outer construction may fail to be uniformly bounded are the endpoints
of the conductors. Hence $N$ is used only outside the disks $U_0\cup U_{x_0}$, where the endpoint
behavior is handled by the local Bessel/Airy parametrices.
\end{enumerate}

\subsection*{A.6. One-line pointer from the main text}\label{app:pointer}

At the end of Section~\ref{sec:outer} it suffices to add:
\begin{quote}
An explicit $\theta/\varphi$-representation of the outer parametrix $N$
(in the spirit of Sorokin's algebraic constructions) is provided in Appendix~A.
\end{quote}

\medskip

\noindent\textbf{Final remark.}
Appendix~A provides an explicit closed form for $N$ once the cubic relation
$\mathcal F(\theta,z)=0$ and the sheet permutation structure are fixed by the concrete model.
It does \emph{not} replace the sign analysis for $\Re\phi$ (Sections~\ref{sec:g-sign}—\ref{sec:lens})
nor the local matching and small-norm error analysis (Sections~\ref{sec:local}—\ref{sec:error}).

\end{document}